\newcommand{\RR}{\mathbb{R}}
\newcommand{\CC}{\mathbb{C}}
\newcommand{\vphi}{\varphi}
\newcommand{\mell}[2][f]{\mathcal{M}(f)} 
\newcommand{\diff}{\mathop{}\mathopen{}\mathrm{d}}
\newtheorem{prop}{Proposition}
\newtheorem{prop/not}{Proposition/notation}
\newtheorem{cor}{Corollaire}
\newtheorem{lem}{Lemme}
\theoremstyle{definition}
\newtheorem{rem}{Remarque}
\newcommand{\ioe}{\leqslant}
\newcommand{\soe}{\geqslant}
\title{Sur la somme de Möbius $\sum_{n \ioe x} \mu(n)n^{-s}$ autour de $s=1$ et des sommes dérivées, première étude}
\author{Daval Florian}
\date{}
\begin{document}
\maketitle
\begin{abstract} Nous prouvons que pour tout $x \soe 10^{12}$ et pour tout $\sigma \soe 1$, on a la majoration \\
$x^{\sigma-1} \log x \big|\sum_{n \ioe x} (\mu(n)/n^{\sigma})\log(x/n)- \log x/\zeta(\sigma)+\zeta'(\sigma)/\zeta^2(\sigma)  \big| \ioe 3.5 \times 10^{-5}$,
améliorant  un résultat de Ramaré et Zuniga-Alterman où à la place de $(3.5 \times 10^{-5};10^{12};\sigma \soe 1)$ ils avaient $(0.043;10^{14}; \sigma \in [1,1.04])$.\\
Nous donnons également d'autres améliorations, en particulier nous étendons des majorations concernant $\sum_{n \ioe x} \mu(n)n^{-s}$ à $\Re(s)>-1$.\\
Nous prouvons  que  $\int_{1}^{x}|\sum_{n \ioe t} \mu(n)/n| \diff{t} \soe 0.0025 (\sqrt{x}-1/x)$ pour tout $x \soe 1$.
\end{abstract}

\section{Introduction}
Les fonctions sommatoires de la fonction de M\"obius
\begin{equation*}
    M(x)=\sum_{n \ioe x} \mu(n)   \quad \text{et } \quad  m(x)=\sum_{n\ioe x} \frac{\mu(n)}{n}
\end{equation*}
et les versions lissées
\begin{equation*}
  \check{m}(x)=\sum_{n\ioe x} \frac{\mu(n)}{n} \log(x/n)=\int_{1}^{x} m(t) \frac{\diff{t}}{t} \quad \text{ et } \quad \check{\check{m}}(x)=\sum_{n\ioe x} \frac{\mu(n)}{n}\log^2(x/n)= 2\int_{1}^{x} \check{m}(t) \frac{\diff{t}}{t} 
\end{equation*}
interviennent dans de nombreux problèmes en théorie des nombres, par exemple dans la résolution complète par Helfgott du problème de Goldbach
ternaire. Il utilise en particulier, dans une de ses prépublications sur le sujet \cite{helfgott2012minor}  une majoration explicite du produit $|m(x)|\log x$ fournie par Ramaré. Pour obtenir des estimations sur $m$ on souhaite utiliser les inégalités obtenues pour $M$ et  des manipulations élémentaires.  

C'est ce que fait El Marraki dans la  prépublication~\cite{preprint}, il a utilisé l'inégalité
\begin{equation*}
|m(x)| \ioe \frac{|M(x)|}{x}+ \frac{1}{x}\int_{1}^{x} |M(t)| \frac{\diff{t}}{t}+\frac{\log x}{x} \quad (x\soe 1)  
\end{equation*}
ce qui est beaucoup plus efficace qu'une intégration par partie.

Balazard dans \cite{BalazarMobiusEnglish}  obtient
\begin{equation}
|m(x)| \ioe \frac{|M(x)|}{x}+ \frac{1}{x^2}\int_{1}^{x} |M(t)| \diff{t}+\frac{8/3}{x} \quad (x\soe 1)\, ,  \label{balineg}
\end{equation}
puis  Ramaré~\cite{RamarExplicitMob} utilise cette dernière inégalité
de Balazard pour convertir ses estimations explicites de $|M|$ en des
estimations explicites pour $|m|$ (la majoration utilisée par Helfgott qui a été évoquée plus haut). 

Dans ma thèse \cite{daval2019identites} j'ai introduit la famille d'identités de convolutions intégrales suivante dont la démonstration est simple (voir la partie \ref{demo} du présent texte ou ma thèse p.16 et p.17 pour une version dans les espaces $\mathrm{L}^p$).
\begin{prop}
Soient $a$ et $b$ deux fonctions arithmétiques,
 $\omega, \varphi: [1, \infty[ \rightarrow \CC$ deux fonctions mesurables. On suppose que $\left.\omega\right|_{[1,x]} $ et
$\left.\varphi\right|_{[1,x]} $ sont bornées. Alors pour tout $x \soe 1$ :
\begin{equation*} 
\!\!\int_{1}^{x}\! \sum_{n \ioe x/t}a(n) \omega \! \Big( \frac{x/t}{n} \Big)  \sum_{k \ioe t}b(k) \varphi\! \Big( \frac{t}{k} \Big) \frac{\diff{t}}{t}
=\!\int_{1}^x \! \sum_{n \ioe x/t}(a\star b)(n) \omega \! \Big( \frac{x/t}{n} \Big)  \varphi (t)\frac{\diff{t}}{t} .
\end{equation*}
\label{terre}
 \end{prop}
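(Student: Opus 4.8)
The plan is to expand both sides in terms of $a,b,\omega,\varphi$, interchange the (finite) summations with the integral over $t$, and then collapse the two expressions onto a common double sum by one change of variables.

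First I would expand the product of the two inner sums on the left, so that the left-hand side reads
\[
\int_{1}^{x}\sum_{\substack{n\ioe x/t\\ k\ioe t}} a(n)\,b(k)\,\omega\!\Big(\tfrac{x}{nt}\Big)\varphi\!\Big(\tfrac{t}{k}\Big)\,\frac{\diff t}{t}.
\]
For a fixed $t\in[1,x]$ the constraints $n\ioe x/t$ and $k\ioe t$ leave only finitely many pairs $(n,k)$, all satisfying $nk\ioe(x/t)\cdot t=x$; since $\omega$ and $\varphi$ are measurable and bounded on $[1,x]$, the integrand is, for fixed $t$, a finite sum of measurable functions dominated by a constant multiple of $1/t$, which is integrable on $[1,x]$. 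Hence I may interchange $\sum_{(n,k)}$ and $\int_{1}^{x}$. For a pair $(n,k)$ with $n,k\soe 1$ and $nk\ioe x$ the admissible $t$ form the interval $\{t:\ \max(1,k)\ioe t\ioe\min(x,x/n)\}=[k,\,x/n]$ (using $\max(1,k)=k$ since $k\soe 1$, and $\min(x,x/n)=x/n$ since $n\soe 1$), while for $nk>x$ this interval is empty, so the left-hand side equals
\[
\sum_{\substack{n,k\soe 1\\ nk\ioe x}} a(n)\,b(k)\int_{k}^{x/n}\omega\!\Big(\tfrac{x}{nt}\Big)\varphi\!\Big(\tfrac{t}{k}\Big)\,\frac{\diff t}{t}.
\]

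Next, in each inner integral I substitute $t=ku$: then $\diff t/t=\diff u/u$, the range $t\in[k,x/n]$ becomes $u\in[1,\,x/(nk)]$, and $\varphi(t/k)=\varphi(u)$, $\omega(x/(nt))=\omega(x/(nku))$, which rewrites the left-hand side as
\[
\sum_{\substack{n,k\soe 1\\ nk\ioe x}} a(n)\,b(k)\int_{1}^{x/(nk)}\omega\!\Big(\tfrac{x}{nku}\Big)\varphi(u)\,\frac{\diff u}{u}.
\]
I would then treat the right-hand side the same way: writing $(a\star b)(m)=\sum_{de=m}a(d)b(e)$ and expanding the sum over $m\ioe x/t$ turns it into $\int_{1}^{x}\sum_{de\ioe x/t}a(d)b(e)\,\omega(x/(det))\,\varphi(t)\,\frac{\diff t}{t}$, where again only finitely many triples are involved; interchanging sum and integral and using $\{t:\ 1\ioe t\ioe x,\ de\ioe x/t\}=[1,\,x/(de)]$ gives
\[
\sum_{\substack{d,e\soe 1\\ de\ioe x}} a(d)\,b(e)\int_{1}^{x/(de)}\omega\!\Big(\tfrac{x}{det}\Big)\varphi(t)\,\frac{\diff t}{t},
\]
which is exactly the expression obtained for the left-hand side after relabelling $(n,k,u)\mapsto(d,e,t)$. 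This proves the identity.

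Beyond the elementary Fubini justification --- every sum in sight is finite and every function is bounded on $[1,x]$ --- there is no analytic content, so the one step that needs a little care is the domain bookkeeping: that $\max(1,k)=k$, that the $t$-interval is nonempty precisely when $nk\ioe x$, and that the substitution $t=ku$ lines the two double sums up term by term. That is the only place where even a mild difficulty arises.
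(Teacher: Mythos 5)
Votre démonstration est correcte et repose au fond sur les mêmes ingrédients que celle de l'article : interversion somme--intégrale justifiée par la finitude des sommes et le caractère borné de $\omega$ et $\varphi$ sur $[1,x]$, changement de variable $t=ku$ (l'article utilise $t=u/n$), et identité de convolution de Dirichlet. La seule différence est d'organisation : vous ramenez les deux membres à une même somme double sur les couples $(n,k)$ avec $nk\leqslant x$ au lieu d'invoquer, comme l'article, la loi de composition $\mathrm{S}_a\circ\mathrm{S}_b=\mathrm{S}_{a\star b}$ tirée du livre de Balazard, ce qui rend votre version un peu plus autonome mais ne change pas la substance de l'argument.
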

Dans ma thèse \cite{daval2019identites} j'ai utilisé cette formule pour les fonctions sommatoires de la fonction de M\"obius $M(x)$ ou $m(x)$ et des versions lissées comme $\check{m}(x)$. J'ai étudié également des identités autour de la fonction de  Liouville $\lambda(n)$ et la fonction de van Mangoldt $\Lambda(n)$. Par exemple, dans la même veine que $\eqref{balineg}$ j'ai obtenu
\begin{equation*}
|\check{m}(x)-1|=\bigg|\sum_{n\ioe x} \frac{\mu(n)}{n} \log(x/n)-1\bigg| \ioe  \frac{1}{x^2}\int_{1}^{x} |m(t)|t \diff{t}+\frac{1.1}{x} \quad (x\soe 1) \,. 
\end{equation*}
Dans \cite{ramar2023mobius} Ramaré et Zuniga-Alterman proposent entre autres de produire des inégalités du type
\begin{equation*}
|\check{m}(x;\sigma)-1|=\bigg|\sum_{n\ioe x} \frac{\mu(n)}{n^{\sigma}} \log(x/n)-\frac{\log x}{\zeta(\sigma)}+\frac{\zeta'(\sigma)}{\zeta(\sigma)}\bigg| \ioe \frac{A_\sigma}{x^{\sigma-1}}  \frac{1}{x}\int_{1}^{x} |m(t)| \diff{t}+\frac{B_\sigma}{x^{\sigma}} \quad (x\soe 1)  \,.
\end{equation*}
 La formule de la proposition \ref{terre} prouvée dans ma thèse comporte 4 paramètres : 2 suites et 2 fonctions. Dans \cite{ramar2023mobius} Ramaré et Zuniga-Alterman utilisent cette formule avec les suites $a(n)=f(n)$, $b(n)=g(n)$ et les fonctions $\vphi(t)=h(1/t)$, $\omega(t)=1$, pour lancer leur fabrique d'identités.  Nous verrons que le paramètre $\omega(t)=\log t$ permettra d'avoir des majorants dépendants non plus de $\int_{1}^{x} |m(t)| \diff{t}$ mais de $\int_{1}^{x} |\check{m}(t)-1| \diff{t}$, ce qui donne de meilleures estimations effectives. 

 Une autre nouveauté dans  \cite{ramar2023mobius} est d'utiliser $a(n)=(-1)^{n+1}$ au lieu de $a(n)=1$ mais nous n'en aurons pas besoin. En particulier dans \cite{ramar2023mobius} la fonction êta de Dirichlet $\eta(s)=(1-2^{1-s}) \zeta(s)$ apparaît à la place de la fonction zêta $\zeta(s)$ (voir notre proposition \ref{mieux} p.~\pageref{mieux} ci-dessous) dans des majorations du type
\begin{equation*}
\bigg|\sum_{n\ioe x} \frac{\mu(n)}{n^{s}}-\frac{1}{\zeta(s)}+\frac{m(x)}{x^{s-1}}\bigg| \ioe \frac{C_s}{x^{\sigma-1}} \frac{|s-1|}{|\eta(s)|}  \frac{1}{x}\int_{1}^{x} |m(t)| \diff{t}+\frac{D_s}{x^{\sigma}} \quad (x\soe 1)  
\end{equation*}
mais $\eta(s)=0$ une infinité de fois pour $\sigma=\Re(s)=1$, ce qui implique que les majorants ci-dessus sont parfois très gros.
  On sait  que Landau s'est beaucoup intéressé aux sommes de Möbius, en particulier à la convergence de $\sum_{n\ioe x} \mu(n)n^{-s}$ vers $1/\zeta(s)$ sur la droite $\Re(s)=1$ et des résultats  équivalents au théorème des nombres premiers par des moyens élémentaires. Il est donc pertinent d'en trouver des versions effectives et élémentaires, ce que nous faisons dans le corollaire \ref{Landau} p.~\pageref{Landau}. Par ailleurs le pôle en $s=1$ de $\zeta(s)$ est intéressant pour nos majorations autour de $s=1$ alors qu'il disparaît avec $\eta(s)=(1-2^{1-s})\zeta(s)$. Signalons que tous les résultats sont pour l'instant bons suivant le paramètre $x$ grand mais faibles par rapport au paramètre $|s|/ \Re(s)$ grand, ce qui doit provenir de la formule d'Euler-Maclaurin et de majorations similaires à $x^{-\Re(s)}=|\int_{x}^{\infty}st^{-(s-1)} \diff{t}| \ioe \int_{x}^{\infty}|st^{s-1}| \diff{t}=x^{-\Re(s)}|s|/\Re(s)$.

 Afin d'avoir des majorations triviales pour pouvoir faire des comparaisons avec les méthodes de convolutions intégrales (c'est-à-dire la proposition \ref{terre}), dans une première étape en utilisant uniquement des intégrations par parties nous prouvons que 
 \begin{align*}
 \bigg|\sum_{n \ioe x} \frac{\mu(n)}{n^\sigma}-\frac{1}{\zeta(\sigma)}+\frac{m(x)}{x^{\sigma-1}}\bigg| &\ioe  \frac{2  (\sigma -1)}{x^{\sigma-1}}  \sup_{t \soe x}|\check{m}(t)-1| \quad (x\soe 1, \sigma>1)\, , \\
  \bigg|\sum_{n \ioe x} \frac{\mu(n)}{n^\sigma}\log(x/n)-\frac{ \log x}{\zeta(\sigma)}+\frac{\zeta'(\sigma)}{\zeta^2(\sigma)}  \bigg| &\ioe  \frac{4 }{x^{\sigma-1}}  \sup_{t \soe x}|\check{m}(t)-1|  \quad (x\soe 1, \sigma>1) \,.
  \end{align*}
Ces majorations triviales surpassent déjà les résultats correspondants dans \cite{ramar2023mobius} car dans \cite{daval2020conversions} j'ai prouvé que 
\begin{equation*}
  |\check{m}(t)-1| \ioe   \frac{8.55 \times 10^{-6}}{\log t} \quad ( t \soe 2.5 \times 10^{11})\,,
\end{equation*}
sans compter que Chirre et Helfgott dans la prépublication \textit{Explicit bounds for sums of the M\"obius function} vont encore améliorer les bornes pour $m$ et $\check{m}$ par des nouvelles méthodes et que leurs résultats seront beaucoup plus forts pour $\check{m}$ que pour $m$. Je remercie H. Helfgoot et A. Chirre pour avoir partagé leur travail avec moi avant sa prépublication.

Dans une deuxième partie je vais cette fois utiliser des identités de convolutions intégrales issue de ma thèse à l'aide de la proposition \ref{terre}  et en particulier étendre les résultats pour $\Re(s)>-1$, ce qui permet pour $s=0$ de retrouver $M(x)$. L'idée est d'utiliser les deux formules suivantes
\begin{equation}
\frac{s-1}{x^{s-1}} \int_{1}^{x}m(x/t) t^{s} \frac{ \diff{t}}{t^2}=\sum_{n \ioe x} \frac{\mu(n)}{n^s}-\frac{m(x)}{x^{s-1}}
\end{equation}
et (par inversion de M\"obius)
\begin{equation}
\frac{s-1}{x^{s-1}}\int_{1}^{x}m(x/t)  \sum_{k \ioe t} \left(\frac{t}{k} \right)^s \frac{ \diff{t}}{t^2}=\frac{s-1}{x^{s-1}}\int_{1}^{x} u^{s} \frac{\diff{u}}{u^2}=1-\frac{1}{x^{s-1}}
\end{equation}
puis d'approcher $ \sum_{k \ioe t} 1/k^{s}$ par $\zeta(s)$ à l'aide de la formule d'Euler-Maclaurin à différents ordres. Pour $s=0$ on retrouve l'étude de la fonction $m_1(x)=m(x)-M(x)/x$ par $m(x)$ comme dans  \cite{daval2020conversions} où j'ai par cette voie prouvé que $\varlimsup |m(x)| \sqrt{x} > \sqrt{2}$ bien que $|m(x)| \sqrt{x}\ioe \sqrt{2}$ pour tout $x \ioe 10^{16}$.

\section{Majorations triviales}  \label{trivial}

\subsection{Les sommes de $\mu(n)n^{-s}$}
Pour étudier $\sum_{n \ioe x} \mu(n)n^{-s}$ et $1/\zeta(s)$ pour $\Re(s)>1$ proche de $1$ on peut utiliser les transformées de Mellin suivantes 
\begin{equation}
 \frac{1}{\zeta(s)}=(s-1) \int_{1}^{\infty}m(t) t^{-s} \diff{t}= (s-1)^2 \int_{1}^{\infty}\check{m}(t) t^{-s} \diff{t}= \frac{ (s-1)^3}{2} \int_{1}^{\infty}\check{\check{m}}(t) t^{-s} \diff{t}
\end{equation}
et leurs versions tronquées
\begin{equation}
(s-1) \int_{x}^{\infty}m(t) t^{-s} \diff{t}= \frac{1}{\zeta(s)}-\sum_{n \ioe x} \frac{\mu(n)}{n^s}+\frac{m(x)}{x^{s-1}}  \label{mtronq}
\end{equation}
et
\begin{equation}
(s-1)^2 \int_{x}^{\infty}(\check{m}(t)-1) t^{-s} \diff{t}= \frac{1}{\zeta(s)}-\sum_{n \ioe x} \frac{\mu(n)}{n^s}+\frac{m(x)}{x^{s-1}}+(s-1)\frac{\check{m}(x)-1}{x^{s-1}}  \label{mtronqch}
\end{equation}
qui font apparaître un développement de type Taylor suivant les puissances de $(s-1)$. Voici une étape supplémentaire :
\begin{align}
&\frac{(s-1)^3}{2} \int_{x}^{\infty}\big( \check{\check{m}}(t)-2\log(t)+2\gamma \big) t^{-s} \diff{t} \nonumber \\
=&\frac{1}{\zeta(s)}-\sum_{n \ioe x} \frac{\mu(n)}{n^s}+\frac{m(x)}{x^{s-1}}+(s-1)\frac{\check{m}(x)-1}{x^{s-1}}+\frac{(s-1)^2}{2}  \frac{\check{\check{m}}(x)-2\log x+2\gamma}{x^{s-1}}\,.  \label{mtronqchch}
\end{align}
On obtient ainsi pour $s=\sigma$ réel la proposition suivante dont le premier résultat à l'avantage d'être uniforme en $\sigma>1$. Les deux autres sont bons au voisinage de $\sigma=1$.
\begin{prop}Pour tout $\sigma>1$ et tout $x \soe 1$ on a
\begin{align*}
\bigg|\frac{1}{\zeta(\sigma)}-\sum_{n \ioe x} \frac{\mu(n)}{n^\sigma}\bigg| &\ioe \frac{2 }{x^{\sigma-1}}  \sup_{t \soe x}|m(t)| \\
\bigg|\frac{1}{\zeta(\sigma)}-\sum_{n \ioe x} \frac{\mu(n)}{n^\sigma}+\frac{m(x)}{x^{\sigma-1}}\bigg| &\ioe  \frac{2  (\sigma -1)}{x^{\sigma-1}}  \sup_{t \soe x}|\check{m}(t)-1| \\
\bigg|\frac{1}{\zeta(\sigma)}-\sum_{n \ioe x} \frac{\mu(n)}{n^\sigma}+\frac{m(x)}{x^{\sigma-1}}+(\sigma-1)\frac{\check{m}(x)-1}{x^{\sigma-1}}\bigg| &\ioe  \frac{ (\sigma -1)^2 }{x^{\sigma-1}} \sup_{t \soe x}|\check{\check{m}}(t)-2\log t+2\gamma| \,.
\end{align*} \label{prop1}
\end{prop}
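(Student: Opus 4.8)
The plan is to read each of the three bounds directly off the corresponding truncated Mellin identity, namely \eqref{mtronq}, \eqref{mtronqch} and \eqref{mtronqchch} respectively. In each case the recipe is the same: specialise $s$ to the real number $\sigma>1$, take absolute values under the integral sign, bound the integrand by the relevant supremum times $t^{-\sigma}$, and use the elementary evaluation $\int_x^\infty t^{-\sigma}\,\diff t = x^{1-\sigma}/(\sigma-1)$, which converges precisely because $\sigma>1$. The three functions $m$, $\check m-1$ and $\check{\check m}-2\log t+2\gamma$ are bounded on $[1,\infty[$, so all the truncated integrals involved are absolutely convergent and the identities make sense.

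Concretely, for the first inequality, \eqref{mtronq} with $s=\sigma$ gives
\[
\Bigl|\tfrac1{\zeta(\sigma)}-\sum_{n\ioe x}\tfrac{\mu(n)}{n^\sigma}+\tfrac{m(x)}{x^{\sigma-1}}\Bigr| = (\sigma-1)\Bigl|\int_x^\infty m(t)t^{-\sigma}\,\diff t\Bigr| \ioe (\sigma-1)\,\sup_{t\soe x}|m(t)|\,\frac{x^{1-\sigma}}{\sigma-1} = \frac{\sup_{t\soe x}|m(t)|}{x^{\sigma-1}},
\]
and since $|m(x)|/x^{\sigma-1}\ioe x^{1-\sigma}\sup_{t\soe x}|m(t)|$ as well, the triangle inequality yields the claimed factor $2$. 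For the second inequality I would start from \eqref{mtronqch}, which bounds $\bigl|\tfrac1{\zeta(\sigma)}-\sum+\tfrac{m(x)}{x^{\sigma-1}}+(\sigma-1)\tfrac{\check m(x)-1}{x^{\sigma-1}}\bigr|$ by $(\sigma-1)^2\sup_{t\soe x}|\check m(t)-1|\cdot x^{1-\sigma}/(\sigma-1) = (\sigma-1)x^{1-\sigma}\sup_{t\soe x}|\check m(t)-1|$; then I move the boundary term $(\sigma-1)(\check m(x)-1)/x^{\sigma-1}$ to the other side, noting that its modulus is at most $(\sigma-1)x^{1-\sigma}\sup_{t\soe x}|\check m(t)-1|$ since $x\soe x$, which produces the factor $2$. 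The third inequality is identical, this time with $(\sigma-1)^3/2$ in front of \eqref{mtronqchch}, the function $\check{\check m}(t)-2\log t+2\gamma$, and the boundary term $\tfrac{(\sigma-1)^2}{2}(\check{\check m}(x)-2\log x+2\gamma)/x^{\sigma-1}$; the two halves recombine to $(\sigma-1)^2 x^{1-\sigma}\sup_{t\soe x}|\check{\check m}(t)-2\log t+2\gamma|$.

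Granted \eqref{mtronq}--\eqref{mtronqchch}, there is essentially no obstacle: each bound is one estimate of the shape $\bigl|\int_x^\infty f\bigr| \ioe \sup|f|\int_x^\infty t^{-\sigma}\,\diff t$, followed by transferring a single boundary term across the inequality at the cost of a harmless factor $2$. The only point that really deserves a line of care is the justification of those three truncated identities themselves, i.e.\ that $\int_x^\infty(\check{\check m}(t)-2\log t+2\gamma)t^{-s}\,\diff t$ and its siblings converge and equal the stated combinations of $1/\zeta(s)$, the partial sum and the boundary terms; this follows from $\check m=\int_1^\cdot m(t)\,\diff t/t$, $\check{\check m}=2\int_1^\cdot\check m(t)\,\diff t/t$, Abel summation, the evaluations $\int_1^\infty t^{-s}\,\diff t=1/(s-1)$ and $\int_1^\infty\log t\,t^{-s}\,\diff t=1/(s-1)^2$, and the expansion $1/\zeta(s)=(s-1)-\gamma(s-1)^2+\cdots$ near $s=1$, which is exactly what pins down the constants $1$, $1$ and $2\gamma$ as the ``main terms'' subtracted inside the integrals.
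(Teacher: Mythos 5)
Votre démonstration est correcte et suit essentiellement la même voie que l'article : on spécialise les identités tronquées \eqref{mtronq}, \eqref{mtronqch} et \eqref{mtronqchch} en $s=\sigma$, on majore l'intégrale par $\sup_{t\soe x}|\cdot|\int_x^{\infty}t^{-\sigma}\diff{t}=\sup_{t\soe x}|\cdot|\,x^{1-\sigma}/(\sigma-1)$, puis on fait passer le terme de bord de l'autre côté par l'inégalité triangulaire, ce qui donne exactement les facteurs $2$, $2(\sigma-1)$ et $(\sigma-1)^2$ annoncés. Les vérifications de convergence et la détermination des constantes $1$, $1$ et $2\gamma$ que vous esquissez correspondent bien à ce que fait l'article via les lemmes \ref{Abel} et \ref{int check}.
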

Remarquons que si l'on exprime l'inverse de la fonction zêta avec $M$ plutôt qu'avec $m$ on a $1/\zeta(s)=s \int_{1}^{\infty}M(t) t^{-s-1} \diff{t}$, ce qui est moins intéressant au voisinage de $s=1$ :
\begin{equation*}
\Big| \frac{1}{\zeta(\sigma)}-\sum_{n \ioe x} \frac{\mu(n)}{n^{\sigma}}+\frac{M(x)}{x^{\sigma}}\Big| \ioe  \frac{\sigma}{\sigma-1} \sup_{t \soe x} \frac{|M(t)|}{t}  \frac{1}{x^{\sigma-1}} \quad (x\soe 1, \sigma>1) \,.
\end{equation*}

\subsection{Les sommes de $\mu(n)n^{-s}\log(x/n)$}
Pour étudier $\sum_{n \ioe x}\mu(n)n^{-s}\log(x/n)$ il suffit de dériver les formules de la sous-section précédente. En dérivant la formule \eqref{mtronq} multipliée par $x^s$  par rapport à $s$, puis en divisant par $x^s$, on trouve
\begin{equation} \label{deriv K1}
 \int_{x}^{\infty}m(t) t^{-s} \diff{t}+(s-1) \int_{x}^{\infty}m(t) t^{-s} \log(x/t) \diff{t}= \frac{ \log x}{\zeta(s)}-\frac{ \zeta'(s)}{\zeta^2(s)}-\sum_{n \ioe x} \frac{\mu(n)}{n^s}\log(x/n) \,,
\end{equation}
en utilisant plutôt $\check{m}$ et la formule \eqref{mtronqch} on a
\begin{align}  \label{deriv K2}
&2(s-1) \int_{x}^{\infty}(\check{m}(t)-1) t^{-s}\diff{t}+(s-1)^2 \int_{x}^{\infty}(\check{m}(t)-1)  t^{-s} \log(x/t) \diff{t}  \nonumber \\ 
&=\frac{ \log x}{\zeta(s)}-\frac{ \zeta'(s)}{\zeta^2(s)}-\sum_{n \ioe x} \frac{\mu(n)}{n^s}\log(x/n) +\frac{\check{m}(x)-1}{x^{s-1}} \, ,
\end{align}
puis avec $\check{\check{m}}$ et  la formule \eqref{mtronqchch} on obtient
\begin{align} \label{deriv K3}
& \frac{3(s-1)^2}{2}\int_{x}^{\infty}\big( \check{\check{m}}(t)-2 \log(t)+2\gamma \big) t^{-s} \diff{t}  \nonumber \\ 
&\hspace{3cm}+ \frac{(s-1)^3}{2}\int_{x}^{\infty}\big( \check{\check{m}}(t)-2 \log(t)+2\gamma \big) t^{-s} \log(x/t) \diff{t}  \nonumber \\ 
=&\frac{ \log x}{\zeta(s)}-\frac{ \zeta'(s)}{\zeta^2(s)}-\sum_{n \ioe x} \frac{\mu(n)}{n^s}\log(x/n) +\frac{\check{m}(x)-1}{x^{s-1}}+(s-1)  \frac{\check{\check{m}}(x)-2\log x+2\gamma}{x^{s-1}} \,.
\end{align}
On obtient ainsi pour $s$ réel la proposition suivante qui est moins bonne au voisinage de $\sigma=1$ que la proposition \ref{prop1} d'un facteur $\sigma -1$. 
\begin{prop}Pour tout $\sigma>1$ et tout $x \soe 1$ on a
\begin{align*}
\bigg|\frac{ \log x}{\zeta(\sigma)}-\frac{ \zeta'(\sigma)}{\zeta^2(\sigma)}-\sum_{n \ioe x} \frac{\mu(n)}{n^\sigma}\log(x/n) \bigg| &\ioe \frac{\frac{2}{\sigma-1}}{x^{\sigma-1}}  \sup_{t \soe x}|m(t)| \\
\bigg|\frac{ \log x}{\zeta(\sigma)}-\frac{\zeta'(\sigma)}{\zeta^2(\sigma)} -\sum_{n \ioe x} \frac{\mu(n)}{n^\sigma}\log(x/n) \bigg| &\ioe  \frac{4 }{x^{\sigma-1}}  \sup_{t \soe x}|\check{m}(t)-1| \\
\bigg|\frac{ \log x}{\zeta(\sigma)}-\frac{\zeta'(\sigma)}{\zeta^2(\sigma)}+\frac{\check{m}(x)-1}{x^{\sigma-1}} -\sum_{n \ioe x} \frac{\mu(n)}{n^\sigma}\log(x/n) \bigg| &\ioe  \frac{3 (\sigma -1) }{x^{\sigma-1}}  \sup_{t \soe x}|(\check{\check{m}}(x)-2\log x+2\gamma)| \,.
\end{align*}
\end{prop}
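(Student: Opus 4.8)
The plan is to read off each of the three inequalities directly from the identity obtained, respectively, in \eqref{deriv K1}, \eqref{deriv K2} and \eqref{deriv K3} (i.e.\ from differentiating \eqref{mtronq}, \eqref{mtronqch}, \eqref{mtronqchch}), evaluated at the real point $s=\sigma>1$, by pulling the relevant supremum out of the integrals. The only analytic input is the pair of elementary evaluations, valid for $\sigma>1$ and $x\soe 1$,
\[
\int_{x}^{\infty} t^{-\sigma}\,\diff{t}=\frac{x^{1-\sigma}}{\sigma-1},\qquad \int_{x}^{\infty} t^{-\sigma}\log(t/x)\,\diff{t}=\frac{x^{1-\sigma}}{(\sigma-1)^{2}},
\]
the second of which follows from the first by the substitution $u=t/x$ and one integration by parts. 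One also uses that $\log(x/t)\ioe 0$ for $t\soe x$, so $|\log(x/t)|=\log(t/x)$ there, and that the truncated Mellin integrals converge absolutely when $\sigma>1$, their integrands being $O(t^{-\sigma}\log t)$; and if any of the suprema $\sup_{t\soe x}|m(t)|$, $\sup_{t\soe x}|\check{m}(t)-1|$, $\sup_{t\soe x}|\check{\check{m}}(t)-2\log t+2\gamma|$ is infinite the corresponding inequality holds trivially, so we may take them finite.

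For the first inequality, put $s=\sigma$ in \eqref{deriv K1}: the right-hand side is exactly $\frac{\log x}{\zeta(\sigma)}-\frac{\zeta'(\sigma)}{\zeta^{2}(\sigma)}-\sum_{n\ioe x}\mu(n)n^{-\sigma}\log(x/n)$, while the left-hand side satisfies
\[
\Big|\int_{x}^{\infty}m(t)t^{-\sigma}\diff{t}+(\sigma-1)\int_{x}^{\infty}m(t)t^{-\sigma}\log(x/t)\,\diff{t}\Big|\ioe \sup_{t\soe x}|m(t)|\,\Big(\frac{x^{1-\sigma}}{\sigma-1}+(\sigma-1)\frac{x^{1-\sigma}}{(\sigma-1)^{2}}\Big)=\frac{2}{(\sigma-1)\,x^{\sigma-1}}\sup_{t\soe x}|m(t)|,
\]
which is the claimed bound.

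For the second inequality, put $s=\sigma$ in \eqref{deriv K2}. The two integrals there carry the prefactors $2(\sigma-1)$ and $(\sigma-1)^{2}$, so the same estimate bounds their combination by $(2+1)\,x^{1-\sigma}\sup_{t\soe x}|\check{m}(t)-1|$; one then transfers the boundary term $\frac{\check{m}(x)-1}{x^{\sigma-1}}$ from the right-hand side to the left and bounds it by $x^{1-\sigma}|\check{m}(x)-1|\ioe x^{1-\sigma}\sup_{t\soe x}|\check{m}(t)-1|$, so that the total coefficient is $3+1=4$. The third inequality is treated identically from \eqref{deriv K3}: the two integrals, with prefactors $\tfrac32(\sigma-1)^{2}$ and $\tfrac12(\sigma-1)^{3}$, contribute $(\tfrac32+\tfrac12)(\sigma-1)=2(\sigma-1)$ times $\sup_{t\soe x}|\check{\check{m}}(t)-2\log t+2\gamma|$, and moving the boundary term $(\sigma-1)\frac{\check{\check{m}}(x)-2\log x+2\gamma}{x^{\sigma-1}}$ to the left adds $(\sigma-1)$ times the same supremum, for the total coefficient $3(\sigma-1)$.

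There is no genuine obstacle once \eqref{deriv K1}--\eqref{deriv K3} are granted; the points deserving care are the sign of $\log(x/t)$ on the integration range, the absolute convergence of the Mellin integrals for $\sigma>1$, and the bookkeeping of which Taylor-type boundary terms sit on which side of each identity — precisely the mechanism by which the coefficient $2$ of Proposition \ref{prop1} becomes $4$ and $(\sigma-1)$ becomes $3(\sigma-1)$, reflecting the loss of a factor $\sigma-1$ announced just before the statement. The same argument with $s$ complex, $\Re(s)>1$, would give the analogous bounds with an extra factor $|s|/\Re(s)$ coming from $\big|\int_{x}^{\infty}st^{-s}\,\diff{t}\big|\ioe |s|\,x^{-\Re(s)}/\Re(s)$, but this is not needed for the real-variable statement.
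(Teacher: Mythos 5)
Your proof is correct and follows exactly the paper's own route: the paper likewise derives \eqref{deriv K1}--\eqref{deriv K3} by differentiating the truncated Mellin transforms and then concludes by applying $\int_{x}^{\infty}t^{-\sigma}\diff{t}=x^{1-\sigma}/(\sigma-1)$ and $\int_{x}^{\infty}t^{-\sigma}\log(x/t)\diff{t}=-x^{1-\sigma}/(\sigma-1)^2$ at $s=\sigma$, pulling out the suprema. Your bookkeeping of the coefficients ($1+1=2$, $2+1+1=4$, $\tfrac32+\tfrac12+1=3$) and of which boundary terms sit on which side matches the stated constants.
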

Si l'on ne se place pas au voisinage de $\sigma=1$ il y a des avantages à utiliser la deuxième majoration qui est uniforme en $\sigma$. C'est ce que nous avons fait pour obtenir le résultat annoncé dans le résumé qui améliore celui de \cite{ramar2023mobius}.
\begin{rem}
    Dans \cite{ramar2023mobius} ce sont les sommes $\sum_n\mu(n)n^{-s}\log(x/n)$ avec $n$ vérifiant $n \wedge q=1$ et $n\ioe x$ qui sont étudiées. Mais d'une part toutes les majorations triviales vues précédemment s'étendent sans modification à ces conditions de co-primalité. D'autre part c'est le cas $q=1$ qui est le plus important car on peut en suivant la méthode du livre d'Helfgott (disponible sur sa page internet), passer du cas $q=1$ au cas général avec peu de pertes. Je renvoie à \cite{ramar2023mobius} pour plus de détails.
\end{rem}
\section{Majorations obtenues par des convolutions intégrales}
\subsection{En passant par $m$ ou $\check{m}$}
Les majorations triviales de la section précédente ne s'appliquent que pour $\Re(s)>1$, pour passer à gauche de cette ligne je vais utiliser les méthodes de convolutions introduites dans ma thèse. Les résultats sont intéressants pour des $s$ avec des  parties imaginaires relativement petites.
\begin{prop} \label{mieux}
Soit $s\neq 1$ un nombre complexe  on a :
\begin{equation}
\zeta(s) \bigg( \sum_{n \ioe x} \frac{\mu(n)}{n^s} -\frac{m(x)}{x^{s-1}} \bigg)-1=\frac{\check{m}(x)-1}{x^{s-1}} +\frac{1}{x^{s-1}}\int_{1}^{x}m(x/t) Q_s(t)\frac{\diff{t}}{t^2}
\end{equation}
avec  \begin{equation}
Q_s(t)=(s-1)\zeta(s)t^{s}- (s-1) \sum_{k \ioe t} \left(\frac{t}{k} \right)^s-t
\end{equation}
et pour $\Re(s)>0$ on a
\begin{align}
\zeta(s)&\bigg(\sum_{n \ioe x} \frac{\mu(n)}{n^s}-\frac{m(x)}{x^{s-1}}-(s-1)\frac{\check{m}(x)-1}{x^{s-1}} \bigg)-1=\frac{(s-1)}{x^{s-1}}\big(\frac{1}2\check{\check{m}}(x)-\log x+\gamma\big)\\
&+\frac{ (s-1)}{x^{s-1}}\int_{1}^{x}  [\check{m}(x/t)-1 ] Q_s(t) \frac{\diff{t}}{t^2}
-(s-1)^2\int_{x}^{\infty}  \Big[ \log t -
\sum_{j \ioe t} \frac{1}{j} -\gamma \Big] t^{-s}\diff{t} \,. \nonumber
\end{align}
Si  $\sigma=\Re(s)>0$ alors $|Q_s(t)| \ioe \dfrac{ |s|}{|\sigma |} |s-1|$ et si de plus $\zeta(s) \neq 0$ alors 
\begin{equation}  \label{parm}
 \bigg|   \sum_{n \ioe x} \frac{\mu(n)}{n^s} -\frac{m(x)}{x^{s-1}}-\frac{1}{\zeta(s)} \bigg| \ioe  \dfrac{|s|}{|\sigma |} \dfrac{|s-1|}{|\zeta(s)|} \frac{1}{x^{\sigma -1}} \frac{1}{x} \int_{1}^{x}|m(t)| \diff{t}+\frac{|\check{m}(x)-1|}{|\zeta(s)| x^{\sigma-1}} 
\end{equation}
et
\begin{align}  \label{parchm}
\bigg|   \sum_{n \ioe x} \frac{\mu(n)}{n^s} -\frac{m(x)}{x^{s-1}}-(s-1)\frac{\check{m}(x)-1}{x^{s-1}}-\frac{1}{\zeta(s)} \bigg| \ioe  \dfrac{|s|}{|\sigma |} \dfrac{|s-1|^2}{|\zeta(s)|} \frac{1}{x^{\sigma -1}} \frac{1}{x} \int_{1}^{x}|\check{m}(t)-1| \diff{t}&\\
 +\frac{|s-1|}{|\zeta(s)| x^{\sigma-1} }\big|\frac{1}2\check{\check{m}}(x)-\log x+\gamma\big|+\frac{0.55|s-1|^2}{|\zeta(s)||\sigma| x^{\sigma}}&\,. \nonumber
\end{align}
\end{prop}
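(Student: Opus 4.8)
The plan is to prove the two displayed identities first; the inequalities \eqref{parm} and \eqref{parchm} then follow by dividing by $\zeta(s)$ and estimating crudely. \emph{For the first identity}, write $Q_s(t)=(s-1)\zeta(s)t^s-(s-1)\sum_{k\ioe t}(t/k)^s-t$ and integrate each of the three summands against $m(x/t)$ over $[1,x]$ for the measure $\diff t/t^2$. The summand $(s-1)\zeta(s)t^s$: interchanging the sum in $m(x/t)=\sum_{n\ioe x/t}\mu(n)/n$ with the $t$-integral gives $\int_1^x m(x/t)\,t^{s-2}\diff t=\sum_{n\ioe x}\frac{\mu(n)}{n}\int_1^{x/n}t^{s-2}\diff t=\frac1{s-1}\left(x^{s-1}\sum_{n\ioe x}\mu(n)n^{-s}-m(x)\right)$, so it contributes $\zeta(s)\left(\sum_{n\ioe x}\mu(n)n^{-s}-m(x)x^{1-s}\right)$. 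The summand $-(s-1)\sum_{k\ioe t}(t/k)^s$: this is exactly the situation of Proposition~\ref{terre} taken with $a(n)=\mu(n)/n$, $\omega\equiv1$, $b(k)=1/k$ and $\vphi(u)=u^{s-1}$ (whence $a\star b=\mathbf 1_{n=1}$); equivalently, collecting by $m=nk$ and using $\sum_{d\mid m}\mu(d)=\mathbf 1_{m=1}$, one finds $\int_1^x m(x/t)\sum_{k\ioe t}(t/k)^s\frac{\diff t}{t^2}=\frac{x^{s-1}-1}{s-1}$, so it contributes $-(1-x^{1-s})$. The summand $-t$: the change of variable $u=x/t$ gives $\int_1^x m(x/t)\frac{\diff t}{t}=\int_1^x m(u)\frac{\diff u}{u}=\check m(x)$, so it contributes $-\check m(x)x^{1-s}$. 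Adding the three contributions and regrouping $x^{1-s}-\check m(x)x^{1-s}=-(\check m(x)-1)x^{1-s}$ gives the identity; it holds for every $s\neq1$, the integral being over a bounded range.

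\emph{For the closed form and the bound of $Q_s$}, I would use the Euler–Maclaurin expansion $\zeta(s)-\sum_{k\ioe t}k^{-s}=\frac{t^{1-s}}{s-1}+\{t\}t^{-s}-s\int_t^\infty\{u\}u^{-s-1}\diff u$, valid (by partial summation and analytic continuation) for $\Re s>0$, $s\neq1$. Substituting this into $Q_s(t)=(s-1)t^s\left(\zeta(s)-\sum_{k\ioe t}k^{-s}\right)-t$, the term $(s-1)t^s\cdot\frac{t^{1-s}}{s-1}=t$ cancels $-t$, and using $st^s\int_t^\infty u^{-s-1}\diff u=1$ one obtains $Q_s(t)=s(s-1)t^s\int_t^\infty(\{t\}-\{u\})u^{-s-1}\diff u$. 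As $|\{t\}-\{u\}|<1$ and $t^\sigma\int_t^\infty u^{-\sigma-1}\diff u=1/\sigma$, this yields $|Q_s(t)|\ioe\frac{|s|}{|\sigma|}|s-1|$ for $\sigma>0$.

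\emph{For the second identity}, I would run the same construction one level up, with $\check m(x/t)-1$ replacing $m(x/t)$. Expanding $Q_s(t)/t^2=(s-1)\zeta(s)t^{s-2}-(s-1)t^{s-2}\sum_{k\ioe t}k^{-s}-1/t$, one evaluates the three smoothed analogues of the building blocks above: $\int_1^x(\check m(x/t)-1)t^{s-2}\diff t$ by interchanging summation; $\int_1^x(\check m(x/t)-1)t^{s-2}\sum_{k\ioe t}k^{-s}\diff t$ by Möbius inversion (collecting by $m=nk$, the $\check m$–part collapses to $-\frac{\log x}{s-1}+\frac{x^{s-1}-1}{(s-1)^2}$ and the constant part leaves a term built from $\sum_{k\ioe x}k^{-s}$); and $\int_1^x(\check m(x/t)-1)\frac{\diff t}{t}=\frac12\check{\check m}(x)-\log x$ via $u=x/t$ together with $\check{\check m}(x)=2\int_1^x\check m(u)\diff u/u$. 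Substituting these into the first identity and applying Euler–Maclaurin a second time, now as $\sum_{k\ioe x}\frac1k=\log x+\gamma+R(x)$ with $R(x)=\int_x^\infty\frac{\{x\}-\{u\}}{u^2}\diff u$, one rearranges the remaining closed-form terms into $\frac{(s-1)\gamma}{x^{s-1}}$ and into the tail $-(s-1)^2\int_x^\infty R(t)\,t^{-s}\diff t$. The point to check is that the stray multiples of $\check m(x)$ drop out: their total coefficient vanishes because $Q_s(1)=(s-1)\zeta(s)-s$. (Alternatively, one passes from the first identity to the second by a Stieltjes integration by parts in $\int_1^x m(x/t)Q_s(t)\frac{\diff t}{t^2}$ using $\diff[\check m(x/t)]=-m(x/t)\,\diff t/t$; the atoms of $\diff[\sum_{k\ioe t}k^{-s}]$ at the integers then produce $-(s-1)\sum_{2\ioe k\ioe x}\check m(x/k)/k=-(s-1)(\log x-\check m(x))$ via $\sum_{1\ioe k\ioe x}\check m(x/k)/k=\sum_{nk\ioe x}\frac{\mu(n)}{nk}\log\frac{x}{nk}=\log x$.)

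\emph{For the inequalities}, divide each identity by $\zeta(s)$ (allowed since $\zeta(s)\neq0$), so that $\zeta(s)(\cdots)-1$ becomes $(\cdots)-1/\zeta(s)$; take absolute values, bound $|Q_s(t)|\ioe\frac{|s|}{|\sigma|}|s-1|$, use $\int_1^x|g(x/t)|\frac{\diff t}{t^2}=\frac1x\int_1^x|g(u)|\diff u$ and $|R(t)|\ioe0.55/t$ (which follows from $\sup_{t\soe1}t|R(t)|\ioe0.55$), and rewrite $\frac12\check{\check m}(x)-\log x+\gamma=\frac12(\check{\check m}(x)-2\log x+2\gamma)$: this reads off \eqref{parm} and \eqref{parchm}. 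I expect the main obstacle to be the bookkeeping for the second identity — carrying out the smoothed double-counting (or the Stieltjes integration by parts) with the right endpoint contributions at $t=1$ and $t=x$, and verifying that the non-canonical leftovers collapse precisely onto $\frac{s-1}{x^{s-1}}\left(\frac12\check{\check m}(x)-\log x+\gamma\right)$ minus the tail integral, the cancellation being powered by the value $Q_s(1)=(s-1)\zeta(s)-s$.
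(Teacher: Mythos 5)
Votre plan suit essentiellement la même route que le papier : même décomposition de $Q_s(t)$ en trois morceaux intégrés contre $m(x/t)$ (resp. $\check m(x/t)-1$) via la sommation d'Abel, l'identité de convolution intégrale de la proposition \ref{terre} et le changement de variable $u=x/t$, même contrôle de $Q_s$ par Euler--Maclaurin (à la normalisation $\{u\}$ contre $\{u\}-1/2$ près), et mêmes majorations finales avec $|H(t)-\log t-\gamma|\ioe 0.55/t$. La seule divergence est le signe devant le terme de queue $(s-1)^2\int_x^\infty(H(t)-\log t-\gamma)t^{-s}\diff t$ et devant le $\gamma$ associé --- point sur lequel l'énoncé et la démonstration du papier ne coïncident d'ailleurs pas entre eux --- mais cela n'affecte pas l'inégalité \eqref{parchm} puisque seule la valeur absolue de ce terme y intervient.
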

Dans \cite{ramar2023mobius} les auteurs obtiennent le même type de majorations que \eqref{parm} mais avec  $\eta(s)=(1-2^{1-s}) \zeta(s)$  au lieu de $\zeta(s)$, ce qui d'une part supprime le pôle de $\zeta$ en $s=1$ alors qu'il est avantageux pour l'inégalité au voisinage de $s=1$. D'autre part $1-2^{1-s}=0$ pour la suite $s_n=1+2n\pi i/\log(2)$ ce qui est dommage pour l'étude sur la droite $\Re(s)=1$ que nous allons voir dans le corollaire suivant. (On a $s_1 \approx 1+9i$.)
\begin{cor} \label{Landau}
    Supposons que $s=\rho$ soit un zéro de la fonction zêta alors pour tout $x\soe 1$ on a  \begin{equation}
    \int_{1}^{x}|m(t)| \diff{t} \soe \left(1+ \frac{ |\rho-1| |\rho|}{|\Re(\rho) |}\right)^{-1} (x^{\Re(\rho)}-1/x)
\end{equation}
En particulier 
\begin{equation}
    \int_{1}^{x}|m(t)| \diff{t} \soe 0.0025 (\sqrt{x}-1/x)\,.
\end{equation}
De plus si l'on suppose  $\Re(\rho)=1$ et $|m(x)| =\varepsilon+o(1)$ alors 
\begin{equation}|\Im(\rho)| \soe \frac{1}{\sqrt{\varepsilon}}-1\,. \end{equation}
L'asymptotique $m(x)=o(1)$ entraîne que $\zeta(s)\neq 0$ pour $\Re(s)=1$ et on a
\begin{equation}
 \bigg|   \sum_{n \ioe x} \frac{\mu(n)}{n^{1+it}}-\frac{1}{\zeta(1+it)}  \bigg| \ioe   \dfrac{2+t^2}{|\zeta(1+it)|}  \frac{1}{x} \int_{1}^{x}|m(u)| \diff{u}+|m(x)|+\frac{1}{x^2}\quad (x\soe 1) \,.
\end{equation}
\end{cor}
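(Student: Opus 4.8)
The plan is to feed a zero $s=\rho$ of $\zeta$ into the first identity of Proposition~\ref{mieux}. Since $\zeta(\rho)=0$ its left side collapses to $-1$, so for every $x\ge 1$
\[
-1=\frac{\check{m}(x)-1}{x^{\rho-1}}+\frac{1}{x^{\rho-1}}\int_1^x m(x/t)\,Q_\rho(t)\,\frac{\diff t}{t^2}.
\]
If $\rho$ is a trivial zero then $\Re(\rho)\le-2$, so $x^{\Re(\rho)}-1/x\le 0$ for $x\ge 1$ and the claimed inequality is vacuous; hence I may assume $\rho$ non-trivial, i.e. $0<\Re(\rho)<1$, which is exactly the range in which Proposition~\ref{mieux} supplies $|Q_\rho(t)|\le|\rho|\,|\rho-1|/|\Re(\rho)|$. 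Taking moduli, using $|x^{\rho-1}|=x^{\Re(\rho)-1}$, applying the substitution $u=x/t$ (which rewrites $\int_1^x m(x/t)Q_\rho(t)\,\diff t/t^2$ as $\tfrac1x\int_1^x m(u)Q_\rho(x/u)\,\diff u$), and multiplying by $x^{\Re(\rho)}$, I arrive at
\[
x^{\Re(\rho)}\ \le\ x\,|\check{m}(x)-1|\ +\ \frac{|\rho|\,|\rho-1|}{|\Re(\rho)|}\int_1^x|m(t)|\,\diff t .
\]

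The next step is to bound $x\,|\check{m}(x)-1|$ by the estimate $|\check{m}(x)-1|\le\tfrac1x\int_1^x|m(t)|\,\diff t+\tfrac1{x^2}$ — the Balazard-type inequality for $\check m$ recalled in the introduction, refined so that the remainder is $1/x^2$; this gives $x\,|\check{m}(x)-1|\le\int_1^x|m(t)|\,\diff t+\tfrac1x$, and inserting it into the last display and solving for $\int_1^x|m(t)|\,\diff t$ produces the first inequality. For the numerical consequence I specialise to $\rho=\tfrac12+i\gamma_1$, $\gamma_1=14.134\ldots$ the ordinate of the first non-trivial zero: then $|\rho|=|\rho-1|=\sqrt{\tfrac14+\gamma_1^2}$, so the constant $\bigl(1+|\rho|\,|\rho-1|/|\Re(\rho)|\bigr)^{-1}=\bigl(1+(\tfrac14+\gamma_1^2)/\tfrac12\bigr)^{-1}$ together with $x^{\Re(\rho)}=\sqrt x$ gives $\int_1^x|m(t)|\,\diff t\ge 0.0025(\sqrt x-1/x)$.

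For the last two assertions I fix a zero $\rho$ with $\Re(\rho)=1$ and assume $|m(x)|=\varepsilon+o(1)$. The first inequality with this $\rho$ reads $\tfrac1x\int_1^x|m(t)|\,\diff t\ge(1-x^{-2})\bigl(1+|\rho|\,|\rho-1|\bigr)^{-1}$, whereas $|m(x)|=\varepsilon+o(1)$ forces $\limsup_{x\to\infty}\tfrac1x\int_1^x|m(t)|\,\diff t\le\varepsilon$; letting $x\to\infty$ yields $1+|\rho|\,|\rho-1|\ge 1/\varepsilon$. Writing $\rho=1+i\tau$, so $|\rho-1|=|\tau|$ and $|\rho|=\sqrt{1+\tau^2}\le 1+|\tau|$, I get $1/\varepsilon\le 1+|\rho|\,|\rho-1|\le 1+|\tau|(1+|\tau|)\le(1+|\tau|)^2$, i.e. $|\Im(\rho)|\ge 1/\sqrt{\varepsilon}-1$. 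Applying this with $\varepsilon=0$ rules out zeros on $\Re(s)=1$ as soon as $m(x)=o(1)$; since $m(x)=o(1)$ holds (Landau), $\zeta(1+it)\neq 0$ for all real $t$, and the final bound is then \eqref{parm} at $s=1+it$: there $|s|/|\sigma|=\sqrt{1+t^2}$, $|s-1|=|t|$, $x^{\sigma-1}=1$, so the coefficient of $\tfrac1x\int_1^x|m|$ is $\sqrt{1+t^2}\,|t|/|\zeta(1+it)|$; after moving $|m(x)/x^{it}|=|m(x)|$ to the right and bounding the term $|\check{m}(x)-1|/|\zeta(1+it)|$ by the same $\check m$-estimate, the coefficients add to $\sqrt{1+t^2}\,|t|+1\le 2+t^2$ and the $\check m$-remainder supplies the $1/x^2$.

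The crux is the estimate $|\check{m}(x)-1|\le\tfrac1x\int_1^x|m(t)|\,\diff t+\tfrac1{x^2}$ in precisely this form: integrating by parts one has $\check{m}(x)-1=\tfrac1x\int_1^x m(u)\,\diff u-\int_x^\infty\!\bigl(\int_1^t m(u)\,\diff u\bigr)t^{-2}\,\diff t$, and while the head term is harmless, a naive triangle-inequality bound on the tail diverges because $\int_1^t m$ is only $O(t)$ pointwise; the $1/x^2$ must therefore come from the sharper explicit control of $\check m$ (obtained, like Balazard's inequality, from the convolution identities). Everything else is routine estimation together with the Landau-style limiting argument.
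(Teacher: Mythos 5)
Your proof is correct and follows the same route as the paper: specialise the first identity of Proposition~\ref{mieux} at a zero $\rho$, bound the kernel by $|Q_\rho(t)|\le|\rho|\,|\rho-1|/|\Re(\rho)|$, control $x|\check{m}(x)-1|$ by $\int_1^x|m(t)|\,\diff t+1/x$ --- this is exactly the paper's Lemma~\ref{balcheck}, proved via the convolution identity of Proposition~\ref{terre} and Balazard's bound $|\alpha(t)|\le 1/t$, as you correctly surmise in your final paragraph --- then specialise to $\rho=\tfrac12+14.13\ldots i$, pass to the limit for the $\Re(\rho)=1$ statement, and invoke \eqref{parm} at $s=1+it$ for the last display. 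Two of your details are in fact cleaner than the paper's: you observe that the bound on $Q_\rho$ requires $\Re(\rho)>0$ and dispose of the trivial zeros separately (the paper applies the bound without comment), and your chain $1/\varepsilon\le 1+|\rho|\,|\rho-1|\le(1+|\Im\rho|)^2$ is the correct one, whereas the paper's displayed step $1+|\rho-1|^2\ge 1+|\rho|\,|\rho-1|$ has the inequality reversed (for $\rho=1+i\tau$ one has $|\rho|\,|\rho-1|=|\tau|\sqrt{1+\tau^2}\ge|\tau|^2$). One caveat you share with the paper: in the last display the $\check{m}$-remainder actually produces $1/(|\zeta(1+it)|x^2)$ rather than $1/x^2$, and $|\zeta(1+it)|$ is not bounded below by $1$; the printed bound is recovered for $x\ge 2$ by noting that the slack $2+t^2-(1+|t|\sqrt{1+t^2})\ge 1/2$ in the leading coefficient absorbs the discrepancy, but as stated for all $x\ge 1$ the inequality needs this extra factor. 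This is a defect of the statement rather than of your argument.
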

\begin{rem}[\textbf{et remerciements}] 
Dans ma thèse \cite{daval2019identites} j'ai montré un résultat similaire pour $M$ (théorème 4 p.119)
\begin{equation*}
\int_{1}^{x} |M(t)| \frac{\diff{t}}{t}  \soe 0{.}004 \sqrt{x}  \quad (x \soe 2)\,.
\end{equation*} 
Olivier Bordellès et indépendemment Olivier Ramaré m'ont donné une piste en 2020 pour améliorer la constante du résultat de ma thèse ci-dessus. Je vais développer ceci dans le prochain papier car cela nécessite quelques calculs numériques (voir aussi la partie expérimentale \ref{normes}). Je les remercie tous les deux pour cette indication et pour avoir pris en considération les travaux de ma thèse puis pour m'avoir encouragé. Plus précisément dans \cite{dona2022explicit} on trouve 
\begin{equation} \label{Hel}
  \bigg| \zeta(s)-  \sum_{n \ioe t} \frac{1}{n^s}-\frac{t^{s-1}}{s-1} \Bigg| \ioe \frac{5/6}{t^{\sigma}} \quad (t \soe |\Im(s)|, 0<\Re(s) \ioe 1, s \neq 1)
\end{equation}
qui appliqué pour le zéro de la fonction zêta de plus petite hauteur $s=1/2+14.13...i$ donnera un meilleure constante environ 18 fois plus petite que $0.0025$ et $0.004$ pour les minorations en $\sqrt{x}$. Il faudra compléter le résultat avec une étude numérique avec contrôle de l'erreur pour $t \ioe |\Im(s)|$ et adapter certaines démonstrations.
\end{rem}
\begin{proof}[Démonstration du corollaire \ref{Landau}]
Appliquons l'égalité de la proposition \ref{mieux} avec $\zeta(\rho)=0$
\begin{equation*}
-x^{\rho-1}=(\check{m}(x)-1) +\int_{1}^{x}m(x/t) Q_{\rho}(t)\frac{\diff{t}}{t^2}\,.
\end{equation*} 
Par le lemme \ref{balcheck} on a $x |\check{m}(x)-1| \ioe\int_{1}^{x}|m(t)| \diff{t}+1/x$ donc
\begin{align*}
x^{\Re(\rho)-1}&= \bigg|(\check{m}(x)-1) +\int_{1}^{x}m(x/t) Q_{\rho}(t)\frac{\diff{t}}{t^2} \bigg| \\
&\ioe  \frac{1}{x} \int_{1}^{x}|m(t)| \diff{t} + \frac{1}{x^2}+   \dfrac{ |\rho|}{|\Re(\rho) |} |\rho-1| \int_{1}^{x}|m(x/t)|\frac{\diff{t}}{t^2} \,. 
\end{align*}
Ainsi
\begin{equation} \label{mino}
    \int_{1}^{x}|m(t)| \diff{t} \soe \left(1+ \frac{ |\rho|  |\rho-1|}{|\Re(\rho) |}\right)^{-1} (x^{\Re(\rho)}-1/x) 
\end{equation}
et pour le zéro de plus petite hauteur ($\rho \approx 1/2+14.13i$) on obtient
\begin{equation*}
\int_{1}^{x}|m(t)| \diff{t} \soe 0.0025 (\sqrt{x}-1/x) \,.
\end{equation*}

Si $\Re(\rho)=1$ et $|m(x)| \ioe \varepsilon+o(1)$ alors par \eqref{mino} pour tout $x \soe 1$ on a
$$1+|\rho-1|^2\soe 1+ |\rho|  |\rho-1| \soe \frac{1}{\varepsilon+o(1)}(1-1/x^2)$$
et on obtient le résultat en faisant tendre $x$ vers $+\infty$.

Pour finir si $m(x)=o(1)$, puisque $1/\sqrt{\varepsilon} \rightarrow +\infty$ quand $\varepsilon \rightarrow 0$ alors $\zeta(s) \neq 0$ pour $\Re(s)=1$. On peut donc appliquer la deuxième partie de la proposition \ref{mieux}.
\end{proof}
Quand on développe
\begin{equation}
Q_s(t)=(s-1)\zeta(s)t^{s}- (s-1) \sum_{k \ioe t} \left(\frac{t}{k} \right)^s-t
\end{equation}
suivant la formule d'Euler-Maclaurin le terme principal est $(s-1)(1/2-\{t\})$ et on a
$$\int_{1}^{x}m(x/t)(s-1)(1/2-\{t\}) \frac{\diff{t}}{t^2}=(s-1)\big(\check{m}(x)-1)-m_1(x)/2+1/x\big)$$
on obtient donc la proposition suivante où la majoration est en $x^{-2}\int_{1}^{x}|m(t)|t \diff{t}$ au lieu de $x^{-1}\int_{1}^{x}|m(t)| \diff{t}$.
\begin{prop} \label{poids}
Soit $s\neq 1$ un nombre complexe  on a :
\begin{equation*}
 \zeta(s) \bigg( \sum_{n \ioe x} \frac{\mu(n)}{n^s} -\frac{m(x)}{x^{s-1}} \bigg)-1=\frac{s(\check{m}(x)-1)}{x^{s-1}}-\frac{(s-1)m_1(x)}{2x^{s-1}}+\frac{(s-1)}{{x^{s}}} +\frac{1}{x^{s-1}}\int_{1}^{x}m(x/t) R_s(t)\frac{\diff{t}}{t^2}
\end{equation*}
avec  \begin{equation}
R_s(t)=(s-1)\zeta(s)t^{s}- (s-1) \sum_{k \ioe t} \left(\frac{t}{k} \right)^s-t-(s-1)(\{t\}-1/2)\,.
\end{equation}

Si $\Re(s)>-1$ alors on a la majoration $|R_s(t)| \ioe  \dfrac{|s+1|}{|\sigma+1|} \dfrac{ |s-1||s|}{6 }  \dfrac{1}{t}$.

Si $s=\sigma$ est réel et $\sigma >-1$ alors
\begin{align*}
 \bigg|   \sum_{n \ioe x} \frac{\mu(n)}{n^\sigma}-\frac{1}{\zeta(\sigma)} -\frac{m(x)}{x^{\sigma-1}}\bigg|
 \ioe  &\dfrac{|\sigma||\sigma-1|}{8|\zeta(\sigma)|} \frac{1}{x^{\sigma -1}} \frac{1}{x^2} \int_{1}^{x}|m(t)|t \diff{t} \\
 &+\frac{1}{|\zeta(\sigma)| x^{\sigma-1}}\left(\frac{ |\sigma-1|}{2}|m_1(x)|+ |\sigma||\check{m}(x)-1|+\frac{ |\sigma-1|}{x} \right) 
\end{align*}
où $m_1(x)=x^{-1}\int_{1}^{x}m(t) \diff{t}$.
\end{prop}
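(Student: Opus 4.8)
Le plan est de repartir de la première identité de la proposition \ref{mieux} et d'y isoler, dans le noyau $Q_s$, le terme de Bernoulli $(s-1)(\{t\}-1/2)$ qu'Euler--Maclaurin en extrait. En comparant les définitions de $Q_s$ (proposition \ref{mieux}) et de $R_s$, on a $R_s(t)=Q_s(t)-(s-1)(\{t\}-1/2)$ ; il suffit donc de scinder
\[
\int_{1}^{x}m(x/t)\,Q_s(t)\,\frac{\diff t}{t^2}=\int_{1}^{x}m(x/t)\,R_s(t)\,\frac{\diff t}{t^2}+(s-1)\int_{1}^{x}m(x/t)\Big(\{t\}-\tfrac12\Big)\frac{\diff t}{t^2}
\]
dans la proposition \ref{mieux}, puis de calculer explicitement la dernière intégrale. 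Tout se ramène ainsi à l'identité de repliement
\[
\int_{1}^{x}m(x/t)\Big(\{t\}-\tfrac12\Big)\frac{\diff t}{t^2}=\big(\check{m}(x)-1\big)-\frac{m_1(x)}{2}+\frac1x ,
\]
après quoi un simple regroupement des termes donne l'identité de la proposition \ref{poids}, valable pour tout $s\neq1$.

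Pour cette identité de repliement j'écrirais $(\{t\}-1/2)t^{-2}=t^{-1}-\lfloor t\rfloor t^{-2}-\tfrac12 t^{-2}$ et je traiterais les trois morceaux via le changement de variable $u=x/t$ : les deux premiers donnent immédiatement $\int_{1}^{x}m(x/t)\diff t/t=\check{m}(x)$ et $\int_{1}^{x}m(x/t)\diff t/t^2=m_1(x)$ ; pour le troisième, je développerais $\lfloor t\rfloor=\sum_{k\ioe t}1$ (somme finie puisque $t\ioe x$), j'appliquerais $u=x/t$ terme à terme et j'utiliserais $\int_{1}^{y}m(u)\diff u=y\,m(y)-M(y)$, ce qui donne $\int_{1}^{x}m(x/t)\lfloor t\rfloor\diff t/t^2=\sum_{k\ioe x}m(x/k)/k-\frac1x\sum_{k\ioe x}M(x/k)$ ; les deux identités classiques $\sum_{k\ioe x}m(x/k)/k=\sum_{n\ioe x}\frac1n\sum_{d\mid n}\mu(d)=1$ et $\sum_{k\ioe x}M(x/k)=\sum_{n\ioe x}\mu(n)\lfloor x/n\rfloor=1$ fermeraient alors le calcul en $\int_{1}^{x}m(x/t)\lfloor t\rfloor\diff t/t^2=1-1/x$. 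C'est précisément le type de manipulation que codifie la proposition \ref{terre}.

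Il restera à majorer $R_s(t)$. Pour $\Re(s)>1$ on a $\zeta(s)-\sum_{k\ioe t}k^{-s}=\sum_{k>t}k^{-s}$, et une sommation d'Abel, suivie de l'écriture $\{u\}=(\{u\}-1/2)+1/2$, donne
\[
\zeta(s)-\sum_{k\ioe t}\frac1{k^s}=\frac{t^{1-s}}{s-1}+\Big(\{t\}-\tfrac12\Big)t^{-s}-s\int_{t}^{\infty}\Big(\{u\}-\tfrac12\Big)u^{-s-1}\diff u ;
\]
en reportant dans $Q_s(t)=(s-1)t^{s}\big(\zeta(s)-\sum_{k\ioe t}k^{-s}\big)-t$ et en simplifiant, il vient la forme close $R_s(t)=-\,s(s-1)\,t^{s}\int_{t}^{\infty}(\{u\}-\tfrac12)u^{-s-1}\diff u$. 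Les deux membres étant holomorphes sur $\{\Re(s)>-1\}\setminus\{1\}$ — l'intégrale se récrivant $(s+1)\int_{t}^{\infty}P(u)u^{-s-2}\diff u$ avec $P(u)=\int_{t}^{u}(\{v\}-1/2)\diff v$, absolument convergente dès que $\Re(s)>-1$ — le prolongement analytique y étend la formule (et $R_1(t)=0$). Une intégration par parties au moyen de $P$, qui vérifie $P(t)=0$ et $|P(u)|\ioe 1/8$ (on a $P(u)=\tfrac12(\{u\}^2-\{u\})-\tfrac12(\{t\}^2-\{t\})$, chaque terme étant dans $[-1/8,0]$), fournit $\bigl|\int_{t}^{\infty}(\{u\}-\tfrac12)u^{-s-1}\diff u\bigr|\ioe \frac{|s+1|}{8}\cdot\frac{t^{-\sigma-1}}{\sigma+1}$, d'où $|R_s(t)|\ioe \frac{|s+1|}{|\sigma+1|}\cdot\frac{|s-1||s|}{8}\cdot\frac1t$ ; cela entraîne a fortiori la majoration en $1/6$ de l'énoncé et fournit le coefficient $1/8$ dans le cas réel, où $|s+1|/|\sigma+1|=1$.

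Pour l'inégalité finale, je diviserais l'identité de la proposition \ref{poids} par $\zeta(\sigma)$ (non nul pour $\sigma>-1$ réel, $\sigma\neq1$), j'appliquerais l'inégalité triangulaire et je majorerais le terme intégral grâce à $|R_\sigma(t)|\ioe |\sigma||\sigma-1|/(8t)$ puis au changement de variable $u=x/t$, qui transforme $\int_{1}^{x}|m(x/t)|\,t^{-3}\diff t$ en $x^{-2}\int_{1}^{x}|m(u)|u\,\diff u$ ; le tout se réorganise en la majoration annoncée. La difficulté principale — à vrai dire la seule — est le maniement de $\int_{t}^{\infty}(\{u\}-1/2)u^{-s-1}\diff u$ dans la bande $-1<\Re(s)\ioe0$, où cette intégrale n'est que semi-convergente : on la contourne soit par prolongement analytique depuis $\Re(s)>1$, soit en passant d'emblée à la primitive $P$ ; tout le reste est élémentaire.
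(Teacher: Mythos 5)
Votre démonstration est correcte et suit pour l'essentiel le même chemin que celle du texte : on part de la première identité de la proposition \ref{mieux}, on extrait de $Q_s$ le terme $(s-1)(\{t\}-1/2)$ et on calcule l'intégrale repliée $\int_1^x m(x/t)(\{t\}-\tfrac12)t^{-2}\diff{t}=(\check{m}(x)-1)-m_1(x)/2+1/x$, identité que le texte se contente d'énoncer juste avant la proposition et que vous justifiez proprement via $\lfloor t\rfloor=\sum_{k\ioe t}1$, $\int_1^y m(u)\diff{u}=y\,m(y)-M(y)$ et $\sum_{k\ioe x}M(x/k)=\sum_{k\ioe x}m(x/k)/k=1$ ; le regroupement des termes redonne bien le facteur $s$ devant $\check{m}(x)-1$. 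La seule différence réelle porte sur la majoration de $R_s(t)=-s(s-1)t^s\int_t^\infty(\{u\}-\tfrac12)u^{-s-1}\diff{u}$ : le texte intègre par parties avec $B_2(\{u\})/2$, ce qui laisse un terme de bord $(s-1)sB_2(\{t\})/(2t)$ et conduit à la constante $1/6$, puis traite séparément le cas réel par la seconde formule de la moyenne pour obtenir $1/8$ ; vous intégrez par parties avec la primitive $P(u)=\int_t^u(\{v\}-\tfrac12)\diff{v}$, qui s'annule en $u=t$ et vérifie $|P|\ioe 1/8$, ce qui donne d'un seul coup $|R_s(t)|\ioe\frac{|s+1|}{|\sigma+1|}\frac{|s||s-1|}{8t}$ pour tout $\Re(s)>-1$, borne légèrement plus forte que celle de l'énoncé et qui couvre uniformément le cas complexe et le cas réel. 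Votre traitement de la semi-convergence de l'intégrale pour $-1<\Re(s)\ioe 0$ (prolongement analytique depuis $\Re(s)>1$, ou réécriture absolument convergente via $P$) est correct et rend l'argument complet là où le texte renvoie simplement à Balazard.
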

Les termes $m_1(x)$ et $\check{m}(x)-1$ ne posent pas de problèmes car en partant du résultat de Ramaré \cite{ramare2013explicit} 
\begin{equation} \label{RamareM}
|M(x)|  \ioe 0.013 x/\log x\quad(x \soe 97\,067)
\end{equation}
j'ai prouvé dans    \cite{daval2020conversions}  que
\begin{align*}
 |m(x)|\ioe&   0.0130073/ \log x \quad (x\soe 97\,063) \\
 |\check{m}(x)-1| \ioe&   (8.55 \times 10^{-6})/\log x \quad ( x \soe 2.5 \times 10^{11})\, , \\
 |m_1(x)| \ioe&  (7.265\times 10^{-6})/\log x \quad (x \soe 2.15 \times 10^{11}) \,.
\end{align*} 
Et si l'on préfère les inégalités intégrales (pour d'autres résultats que le théorème des nombres premiers) on a 
\begin{align*}
    |\check{m}(x)-1| &\ioe \frac{1}{x}\int_{1}^{x}|m(t)| \diff{t}+ \frac{1}{x^2}, \quad |\check{m}(x)-1| \ioe \frac{1}{x^2}\int_{1}^{x}|m(t)|t \diff{t}+ \frac{1.1}{x}, \\
\text{ et }  \quad \quad   |m_1(x)| &\ioe \frac{1}{x}\int_{1}^{x}|m(t)| \diff{t}+ \frac{1}{x}, \quad \quad \quad |m_1(x)| \ioe \frac{1}{x^2}\int_{1}^{x}|m(t)|t \diff{t}+ \frac{2}{x}\,.
\end{align*}

\subsection{Par la série harmonique et autres idées au voisinage de $s=1$}

\subsubsection{La série harmonique}
De la même manière que nous avons utilisé $m(u)=\sum_{n \ioe u}\mu(n)/n$ plutôt que $M(u)=\sum_{n \ioe u} \mu(n)$ pour exprimer $1/\zeta(s)$, on peut utiliser  $H(u)=\sum_{n \ioe u} 1/n$ pour exprimer la fonction $\zeta(s)$. Dans \cite[lemma 2.8]{dona2022explicit} on trouve une preuve rigoureuse de 
\begin{equation}
-0.5407...=-2(\log 2+\gamma -1)   \ioe  x(H(x)-\log x-\gamma) \ioe \frac{1}{2} \quad (x \soe 1)\, .
\end{equation}
Par ailleurs on a 
\begin{equation}
(s-1) \int_{t}^{\infty}(H(u)-\log u -\gamma) u^{-s} \diff{u}=\zeta(s)-\sum_{n \ioe t}\frac{1}{n^s}-\frac{1}{(s-1)t^{s-1}}+\frac{H(t)-\log t -\gamma}{t^{s-1}} \label{har}
\end{equation}
\begin{equation}
s\int_{t}^{\infty}(\lfloor u\rfloor-u-1/2)u^{-s-1} \diff{u}=\zeta(s)-\sum_{n \ioe t} \frac{1}{n^s}-\frac{1}{(s-1)t^{s-1}}+\frac{\lfloor t\rfloor-t+1/2}{t^s}\,. \label{ent}
\end{equation}
C'est le facteur $s$ devant l'intégrale dans \eqref{ent} qui se retrouve dans la majoration $$|Q_s(t)-(s-1)(\{t\}-1/2)| \ioe  0.5  |s-1| |s| /|\sigma |$$ et dans les propositions \ref{mieux} et \ref{poids}, si on utilise \eqref{har} on obtient $$|Q_s(t)-(s-1)t(H(t)-\log t-\gamma)| \ioe 0.55  |s-1|^2/|\sigma |\,. $$ Pour se localiser encore plus en $s=1$ on peut envisager de passer par 
\begin{equation*} \check{H}(x)= \int_{1}^{x} H(t) \frac{\diff{t}}{t}=\sum_{n \ioe x} \frac{1}{n} \log(x/n) \quad \text{ et } \quad \check{\check{H}}(x)= 2\int_{1}^{x} \check{H}(t) \frac{\diff{t}}{t} \,. \end{equation*}
Pour adapter la proposition \ref{poids} on aurait besoin de 
\begin{equation} \label{double check borne}
\int_{1}^{x}m(x/t)t(H(t)-\log t-\gamma) \frac{\diff{t}}{t^2}=-0.5(\check{\check{m}}(x)-2\log x +2\gamma)-\gamma(\check{m}(x)-1) \,.
\end{equation}

\subsubsection{Normes intégrales à la place des normes infinies et positivité} \label{normes}
Dans la proposition \ref{mieux}, au lieu d'utiliser des majorations de la norme infinie de $Q_s$ comme  $|Q_s(t)| \ioe    |s-1| |s| /|\sigma |$ on peut passer par des normes $\mathrm{L}^1$, par exemple $\int_{1}^{\infty}|Q_s(t)|t^{-2}\diff{t}$, en effet j'ai montré dans \cite{daval2020conversions} que c'était beaucoup plus efficace pour des résultats équivalents au théorème des nombres premiers (voir les résultats sur $\check{m}$ et $m_1$ juste après \eqref{RamareM}, les inégalités avec intégrales proviennent de normes infinies et les autres de normes $\mathrm{L}^1$). Cela peut se voir simplement en remplaçant $m$ par $\varepsilon$ dans 
\begin{equation*}
\zeta(s) \bigg( \sum_{n \ioe x} \frac{\mu(n)}{n^s} -\frac{m(x)}{x^{s-1}} \bigg)-1=\frac{\check{m}(x)-1}{x^{s-1}} +\frac{1}{x^{s-1}}\int_{1}^{x}m(x/t) Q_s(t)\frac{\diff{t}}{t^2}\,.
\end{equation*}
On a 
\begin{equation}
    \int_{1}^{\infty}Q_s(t)\frac{\diff{t}}{t^2}=\frac{1}{s-1}-\zeta(s)+\gamma
\end{equation}
et si l'on suppose qu'il n'y a pas trop de compensations c'est le bon ordre de grandeur de $\int_{1}^{\infty}|Q_s(t)|t^{-2}\diff{t}$. Mais l'ordre de grandeur semble bien petit. Ce calcul exact d'intégrale permet tout de même de contrôler les calculs numériques en PARI/GP et les problèmes de précisions.

Attention, dans la fin de cette sous-section il y a des tests mais pas de démonstrations rigoureuses.

Pour $s=0.5+14.13i$ par la mojoration de la proposition \ref{mieux} on a $|Q_s(t)| \ioe 399.9$.  Mais  par \eqref{Hel} on a $|Q_s(t)| \ioe 9.4$ pour $t \soe 14.13$ et des calculs PARI/GP semblent montrer que  $|Q_s(t)| \ioe 20.512$ pour $t \ioe 14.13$, ce qui donnerait
\begin{equation}
    \int_{1}^{x}|m(t)| \diff{t} \soe 0.047 (\sqrt{x}-1/x)\,.
\end{equation}
Il semble aussi que
\begin{equation}
    \int_{1}^{\infty}|Q_s(t)|\frac{\diff{t}}{t^2}\ioe  \int_{1}^{20}|Q_s(t)|\frac{\diff{t}}{t^2}+\frac{9.4}{20}\ioe 11.
\end{equation} 
Il faudrait donc réussir à mieux étudier $Q_s$ pour des $s$ complexes avec disons $|s|/ \Re(s) \soe 10$.  Rappelons que la fonction \textit{intnum} de  PARI/GP ne donne pas de borne pour l'erreur commise par l'approximation.
\section{Fin des démonstrations} \label{demo}

\begin{proof}[Démonstration de la proposition \ref{terre}]
Utilisons les notations du chapitre IV du livre~\cite{balazardlivre} de Balazard, définissons $
\mathrm{S}_a \phi :[1,\infty[ \rightarrow \CC$ par  \[\mathrm{S}_a \vphi(x)=\sum_{n \ioe x}a(n) \vphi(x/n) \quad(x \soe 1).
\]
Le  résultat suivant correspond à l'égalité (3) de~\cite{balazardlivre} page 35, on a
\begin{equation}
\mathrm{S}_{a } \circ \mathrm{S}_{b } = \mathrm{S}_{a \star b}. \label{conv}
\end{equation}  
Nous utiliserons  les deux interversions entre somme et intégrale suivantes : 
\[
\int_{1}^x \sum_{n \ioe x/t} \cdots \diff{t}\underset{(\nabla)}{=}\sum_{n\ioe x} \int_{1}^{x/n}\cdots \diff{t} 
\quad \text{ et } \quad \int_{1}^x \sum_{n \ioe t}\cdots\diff{t}\underset{(\Delta)}{=}\sum_{n\ioe x} \int_{n}^{x} \cdots \diff{t}. 
\] 
On a la suite d'égalité suivante : 
\begin{align*} 
\int_{1}^{x} \mathrm{S}_a \omega (x/t) \mathrm{S}_b \vphi (t)  \frac{\diff{t}}{t}\underset{(\nabla)}{=}\sum_{n \ioe x} a(n)\int_{1}^{x/n} \omega (x/(tn)
\mathrm{S}_b \vphi (t)    \frac{\diff{t}}{t}&\underset{\phantom{(\nabla)}}{=}\sum_{n \ioe x} a(n) \int_{n}^{x} \omega (x/u)  \mathrm{S}_b \vphi (u/n)  
\frac{\diff{u}}{u} \\
&\underset{(\Delta)}{=}\int_{1}^{x}  \omega (x/u)    \sum_{n \ioe u} a(n)\mathrm{S}_b \vphi (u/n)   \frac{\diff{u}}{u},
\end{align*} 
où l'on a utilisé le changement de variable $t=u/n$.\\
L'application de $\eqref{conv}$ donne
$\sum_{n \ioe u} a(n)\mathrm{S}_b \vphi (u/n)=\mathrm{S}_{a \star b}\vphi (u)$, ce qui conduit à
\begin{align*}
\int_{1}^{x}  \omega (x/u)    \sum_{n \ioe u} a(n)\mathrm{S}_b \vphi (u/n)   \frac{\diff{u}}{u}&\underset{\phantom{(\nabla)}}{=}\int_{1}^{x} \omega (x/u) 
\mathrm{S}_{a \star b}\vphi (u) \frac{\diff{u}}{u}\\
&\underset{(\Delta)}{=}\sum_{n \ioe x} (a \star b)(n) \int_{n}^{x} \omega(x/u)\vphi (u/n)  \frac{\diff{u}}{u}\\
&\underset{\phantom{(\nabla)}}{=}\sum_{n \ioe x} (a \star b)(n) \int_{1}^{x/n}\omega(x/(nt))\vphi (t)  \frac{\diff{t}}{t} \\
&\underset{(\nabla)}{=} \int_{1}^{x} \sum_{n \ioe x/t} (a \star b)(n) \omega(x/(nt)) \vphi (t)  \frac{\diff{t}}{t},
\end{align*}
où l'on  a encore une fois utilisé le changement de variable $t=u/n$.
On obtient l'égalité annoncée.
\end{proof}

\begin{lem} \label{Abel}
Soit $a$ une suite arithmétique, on pose $L(t) =\sum_{n \ioe t } a(n)/n$. Pour tout nombre complexe $s$ et tout $x \soe 1$ on a
\begin{equation}
   (s-1) \int_{1}^{x} \sum_{n \ioe t } L(t) t^{-s} \diff{t}=\sum_{n \ioe x} \frac{a(n)}{n^s}- \frac{L(x)}{x^{s-1}} \,.
\end{equation}
\end{lem}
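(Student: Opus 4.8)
The plan is to recognize Lemma~\ref{Abel} as a disguised form of Abel (partial) summation and to derive it from the elementary antiderivative of $t^{-s}$. Reading the integrand on the left-hand side as $L(t)t^{-s}$, I would first expand $L(t)=\sum_{n\ioe t}a(n)/n$ inside the integral and interchange the sum with the integral by the interversion $(\Delta)$ recalled in the proof of Proposition~\ref{terre}, namely $\int_{1}^{x}\sum_{n\ioe t}(\cdots)\diff t=\sum_{n\ioe x}\int_{n}^{x}(\cdots)\diff t$; this is legitimate with no convergence subtlety, since for $t\in[1,x]$ only the finitely many indices $n\ioe x$ occur. One obtains
\begin{equation*}
(s-1)\int_{1}^{x}L(t)\,t^{-s}\diff t=(s-1)\sum_{n\ioe x}\frac{a(n)}{n}\int_{n}^{x}t^{-s}\diff t .
\end{equation*}

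Next I would evaluate the inner integral. For $s\neq 1$ we have $(s-1)\int_{n}^{x}t^{-s}\diff t=-\big[t^{1-s}\big]_{n}^{x}=n^{1-s}-x^{1-s}$, while for $s=1$ both sides of this last equality vanish; hence the identity $(s-1)\int_{n}^{x}t^{-s}\diff t=n^{1-s}-x^{1-s}$ is valid for every complex $s$. Substituting it back gives
\begin{equation*}
(s-1)\int_{1}^{x}L(t)\,t^{-s}\diff t=\sum_{n\ioe x}\frac{a(n)}{n}\big(n^{1-s}-x^{1-s}\big)=\sum_{n\ioe x}\frac{a(n)}{n^{s}}-\frac{1}{x^{s-1}}\sum_{n\ioe x}\frac{a(n)}{n},
\end{equation*}
and since the last sum is exactly $L(x)$ this is the claimed equality.

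There is no real obstacle here: the statement is partial summation in disguise, and the only points worth a remark are that the exchange of sum and integral involves a finite sum — so it is unconditional, in particular for $\Re(s)\ioe 1$ — and the degenerate case $s=1$, where the factor $s-1$ kills the integral and the right-hand side collapses to $\sum_{n\ioe x}a(n)/n-L(x)=0$ as well. Alternatively, one may bypass the explicit antiderivative altogether by applying directly the Abel summation formula $\sum_{n\ioe x}c(n)f(n)=C(x)f(x)-\int_{1}^{x}C(t)f'(t)\diff t$ with $c(n)=a(n)/n$, $C(t)=L(t)$ and $f(t)=t^{1-s}$, which yields the identity in one line after noting $f'(t)=(1-s)t^{-s}$.
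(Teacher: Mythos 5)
Votre démonstration est correcte et suit essentiellement la même voie que celle du papier : l'identité est bien la sommation d'Abel appliquée à $\sum_{n\ioe x}\frac{a(n)}{n}n^{1-s}$, que vous retrouvez en développant $L(t)$ sous l'intégrale et en échangeant somme (finie) et intégrale. Le traitement explicite du cas dégénéré $s=1$ et la remarque finale sur l'application directe de la formule d'Abel sont des compléments bienvenus mais ne changent pas l'argument.
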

\begin{proof}
    On écrit
    $$\sum_{n \ioe x} \frac{a(n)}{n^s}= \sum_{n \ioe x} \frac{a(n)}{n} n^{-s+1}$$
    et on utilise la formule sommatoire d'Abel.
\end{proof}

\begin{lem} \label{int check}
    Soit  $f:[1,+\infty[ \mapsto \CC$ une fonction intégrable et posons  $\check{f}(x)= \int_{1}^{x} f(t)/t \diff{t}$ alors pour tout  $s\in \CC$ on a 
    \begin{equation*}
        (s-1) \int_{1}^{x}  \check{f}(t)  t^{-s} \diff{t}=  \int_{1}^{x} f(t)t^{-s} \diff{t} - x^{1-s}  \check{f}(x) \quad (x \soe 1) \,.
    \end{equation*}
\end{lem}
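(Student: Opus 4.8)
L'approche consiste à reconnaître une intégration par parties. Puisque $f$ est intégrable sur $[1,x]$, la fonction $\check{f}$ y est absolument continue, avec $\check{f}'(t)=f(t)/t$ pour presque tout $t$ et $\check{f}(1)=0$. Je réécrirais l'intégrande du membre de droite sous la forme $f(t)t^{-s}=t\,\check{f}'(t)\,t^{-s}=\check{f}'(t)\,t^{1-s}$, puis j'intégrerais par parties sur $[1,x]$ en dérivant le facteur $t^{1-s}$ (dont la dérivée est $(1-s)t^{-s}$) :
\begin{equation*}
\int_{1}^{x} f(t)\,t^{-s}\diff{t}=\int_{1}^{x}\check{f}'(t)\,t^{1-s}\diff{t}=\big[\check{f}(t)\,t^{1-s}\big]_{1}^{x}+(s-1)\int_{1}^{x}\check{f}(t)\,t^{-s}\diff{t}.
\end{equation*}
Comme $\check{f}(1)=0$, le terme tout intégré se réduit à $x^{1-s}\check{f}(x)$, et il ne reste plus qu'à isoler $(s-1)\int_{1}^{x}\check{f}(t)t^{-s}\diff{t}$ pour obtenir l'égalité annoncée. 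On notera que ce calcul est valable uniformément en $s$, y compris pour $s=1$ (où les deux membres valent $0$).

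Si l'on souhaite éviter d'invoquer l'absolue continuité, une variante par interversion fonctionne tout aussi bien : en écrivant $\check{f}(t)=\int_{1}^{t}f(u)u^{-1}\diff{u}$ et en appliquant le théorème de Fubini sur le triangle $1\ioe u\ioe t\ioe x$ (licite car $u^{-1}\ioe 1$ et $|t^{-s}|$ est borné sur $[1,x]$, de sorte que $(u,t)\mapsto f(u)u^{-1}t^{-s}$ y est intégrable), on arrive à $\int_{1}^{x}\check{f}(t)t^{-s}\diff{t}=\int_{1}^{x}f(u)u^{-1}\big(\int_{u}^{x}t^{-s}\diff{t}\big)\diff{u}$. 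Pour $s\neq 1$ l'intégrale intérieure vaut $(x^{1-s}-u^{1-s})/(1-s)$, et en distribuant on retombe exactement sur $\big(x^{1-s}\check{f}(x)-\int_{1}^{x}f(u)u^{-s}\diff{u}\big)/(1-s)$ ; le cas $s=1$ est trivial, les deux membres de l'énoncé étant alors nuls.

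Il n'y a pas de véritable obstacle ici : le seul point à soigner est la justification de l'intégration par parties (ou de l'interversion) pour un $f$ seulement supposé intégrable, ce que règlent les remarques ci-dessus. L'intérêt du lemme est qu'appliqué de manière répétée — à $f$, puis à $\check{f}$, etc., comme dans l'enchaînement $m\rightsquigarrow\check{m}\rightsquigarrow\check{\check{m}}$ — il fournit directement les développements de type Taylor en les puissances de $(s-1)$ utilisés dans les formules tronquées \eqref{mtronq}, \eqref{mtronqch} et \eqref{mtronqchch} de la section précédente.
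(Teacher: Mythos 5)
Votre démonstration est correcte et, dans sa seconde variante (Fubini sur le triangle $1\ioe u\ioe t\ioe x$ puis calcul de l'intégrale intérieure, avec le cas $s=1$ traité à part), elle coïncide exactement avec la preuve du papier. La première variante par intégration par parties n'en est qu'une reformulation équivalente et tout aussi valable, l'absolue continuité de $\check{f}$ étant bien acquise pour $f$ intégrable.
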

\begin{proof}
Si $s\neq1$ alors par le théorème de Fubini on a
    \begin{align*}
   \int_{1}^{x}  \int_{1}^{t} f(u) \frac{\diff{u}}{u} t^{-s} \diff{t}= \int_{1}^{x}  \int_{u}^{x} t^{-s} \diff{t} f(u)\frac{\diff{u}}{u}= \frac{1}{s-1} \int_{1}^{x} f(u)u^{-s} \diff{u}- \frac{x^{1-s}}{s-1} \int_{1}^{x} f(u) \frac{\diff{u}}{u} \,.
\end{align*}
Si $s=1$ alors les deux membres de l'égalité valent $0$.
\end{proof}

\begin{proof}[Démonstrations de la partie triviale \ref{trivial}]
Commençons par les formules passant par $m$. On applique le lemme \ref{Abel} avec $a(n)=\mu(n)$ et on  obtient
\begin{equation}
(s-1) \int_{1}^{x}m(t) t^{-s} \diff{t}=\sum_{n \ioe x} \frac{\mu(n)}{n^s}-\frac{m(x)}{x^{s-1}}  \label{Mellin tronq m}
\end{equation}
si de plus $\Re(s)>1$ on peut passer à la limite car $|m(t)| \ioe 1$ pour tout $t \soe 1$ et
\begin{equation}
(s-1) \int_{1}^{\infty}m(t) t^{-s} \diff{t}=\frac{1}{\zeta(s)} \,.
\end{equation}
  En soustrayant les deux égalités précédentes on obtient
\begin{equation}
(s-1) \int_{x}^{\infty}m(t) t^{-s} \diff{t}= \frac{1}{\zeta(s)}-\sum_{n \ioe x} \frac{\mu(n)}{n^s}+\frac{m(x)}{x^{s-1}} \,.
\end{equation}
La majoration annoncée en découle par la formule suivante appliquée pour $s=\sigma$ 
\begin{equation} \label{int moins s}
\int_{x}^{\infty} t^{-s} \diff{t}=\frac{1}{s-1} \frac{1}{x^{s-1}} \,.
\end{equation}

Passons aux  formules passant par $\check{m}$. On applique le lemme \ref{int check} avec $f(t)=m(t)$ puis l'égalité \eqref{Mellin tronq m}, on obtient
\begin{align} \label{Mellin tronq check m}
(s-1)^2 \int_{1}^{x}\check{m}(t) t^{-s} \diff{t}&=(s-1)\int_{1}^{x}m(t) t^{-s}- (s-1)x^{1-s}\check{m}(x) \nonumber \\
&=\sum_{n \ioe x} \frac{\mu(n)}{n^s}-\frac{m(x)}{x^{s-1}}-(s-1)\frac{\check{m}(x)}{x^{s-1}} 
\end{align}
si de plus $\Re(s)>1$ on peut passer à la limite car $|\check{m}(t)-1| \ioe 2$ pour tout $t \soe 1$ (par le lemme \ref{balcheck})
\begin{equation}
(s-1)^2 \int_{1}^{\infty}\check{m}(t) t^{-s} \diff{t}=\frac{1}{\zeta(s)} \,.
\end{equation}
Par soustraction on aboutit à 
\begin{equation}
(s-1)^2 \int_{x}^{\infty}\check{m}(t) t^{-s} \diff{t}= \frac{1}{\zeta(s)}-\sum_{n \ioe x} \frac{\mu(n)}{n^s}+\frac{m(x)}{x^{s-1}}+ (s-1)\frac{\check{m}(x)}{x^{s-1}} \,.
\end{equation}
Par \eqref{int moins s} on a 
\begin{equation}
(s-1)^2 \int_{x}^{\infty}(\check{m}(t)-1) t^{-s}-1 \diff{t}=(s-1)^2 \int_{x}^{\infty}\check{m}(t) t^{-s} \diff{t}-(s-1)  
\end{equation}
et donc 
\begin{equation}
(s-1)^2 \int_{x}^{\infty}(\check{m}(t)-1) t^{-s} \diff{t}= \frac{1}{\zeta(s)}-\sum_{n \ioe x} \frac{\mu(n)}{n^s}+\frac{m(x)}{x^{s-1}}+(s-1)\frac{\check{m}(x)-1}{x^{s-1}} \,.
\end{equation}

Passons aux  formules passant par $\check{\check{m}}$. On applique le lemme \ref{int check} avec $f(t)=\check{m}(t)$ en sachant que $\check{\check{m}}(x)=2 \check{f}(t)$, puis l'égalité \eqref{Mellin tronq check m}, on obtient
\begin{align} 
(s-1)^3 \int_{1}^{x}\frac{1}{2}\check{\check{m}}(t) t^{-s} \diff{t}&=(s-1)^2\int_{1}^{x}\check{m}(t) t^{-s}- (s-1)^2x^{1-s}\frac{1}{2}\check{\check{m}}(x) \nonumber \\
&=\sum_{n \ioe x} \frac{\mu(n)}{n^s}-\frac{m(x)}{x^{s-1}}-(s-1)\frac{\check{m}(x)}{x^{s-1}}-\frac{(s-1)^2}{2} \frac{\check{\check{m}}(x)}{x^{s-1}} \,.
\end{align}
Si de plus $\Re(s)>1$ on peut passer à la limite car $|\check{\check{m}}(t)-\log(t)+\gamma| \ioe 4\gamma+2$ pour tout $t \soe 1$ (voir \eqref{double check borne})
\begin{equation}
\frac{(s-1)^3}{2} \int_{1}^{\infty}\check{\check{m}}(t) t^{-s} \diff{t}=\frac{1}{\zeta(s)} \,.
\end{equation}
Par soustraction on aboutit à 
\begin{equation}
\frac{(s-1)^3}{2}  \int_{x}^{\infty}\check{\check{m}}(t) t^{-s} \diff{t}= \frac{1}{\zeta(s)}-\sum_{n \ioe x} \frac{\mu(n)}{n^s}+\frac{m(x)}{x^{s-1}}+ (s-1)\frac{\check{m}(x)}{x^{s-1}} +\frac{(s-1)^2}{2} \frac{\check{\check{m}}(x)}{x^{s-1}} \,.
\end{equation}
Pour finir 
\begin{align*}
&(s-1)^3 \int_{x}^{\infty}\frac{1}{2}\big( \check{\check{m}}(t)+a \log(t)+b \big) t^{-s} \diff{t}\\
=&\frac{1}{\zeta(s)}-\sum_{n \ioe x} \frac{\mu(n)}{n^s}+\frac{m(x)}{x^{s-1}}+(s-1)\frac{\check{m}(x)+a/2}{x^{s-1}}+\frac{(s-1)^2}{2}  \frac{\check{\check{m}}(x)+a\log x+b}{x^{s-1}}
\end{align*}
et on choisit $a=-2$ puis $b=2\gamma$.

Nous allons maintenant dériver les transformées de Mellin tronquée qui sont toutes de la forme 
\begin{equation}
(s-1)^K \int_{x}^{\infty}k(t) t^{-s} \diff{t}= F(s)=\frac{1}{\zeta(s)}-\sum_{n \ioe x} \frac{\mu(n)}{n^s}+ \sum_{k=0}^{K-1}\frac{(s-1)^k}{k!}\frac{f_k(x)}{ x^{s-1}} \,.
\end{equation}
On a
\begin{equation}
K(s-1)^{k-1}\int_{x}^{\infty}k(t)(x/t)^s \diff{t}+(s-1)^K\int_{x}^{\infty}k(t)\log(x/t)(x/t)^s=\big(x^sF(s)\big)'
\end{equation}
et
\begin{equation}
x^{-s}\big(x^sF(s)\big)'=\frac{\log x}{\zeta(s)}-\frac{\zeta'(s)}{\zeta^2(s)}-\sum_{n \ioe x} \frac{\mu(n)}{n^s} \log(x/n)+\sum_{k=1}^{K-1}\frac{(s-1)^{k-1}}{(k-1)!}\frac{f_k(x)}{ x^{s-1}} \,.
\end{equation}
En faisant $K=1$ on obtient \eqref{deriv K1}, $K=2$ donne \eqref{deriv K2} et $K=3$ fournit \eqref{deriv K3}.

On  multiplie \eqref{int moins s} par $x^s$ puis on dérive par rapport à $s$ et on divise par $x^s$ : 
\begin{equation} \label{int moins s log}
\int_{x}^{\infty} t^{-s} \log(x/t) \diff{t}=\frac{-1}{(s-1)^2} \frac{1}{x^{s-1}} \,.
\end{equation}
La majoration annoncée en découle par la formule ci-dessus appliquée  pour $s=\sigma$.
\end{proof}

\begin{lem} \label{balcheck}
Pour tout $x\soe 1$ on a
    $$ |\check{m}(x)-1| \ioe \frac{1}{x}\int_{1}^{x}|m(t)| \diff{t}+ \frac{1}{x^2}\,.$$
\end{lem}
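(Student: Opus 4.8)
Le plan est de tout ramener à l'identité exacte
\[
\int_{1}^{x} m(t)\,\lfloor x/t\rfloor\,\diff{t}=x-1 \qquad (x\soe 1),
\]
analogue intégral de la relation classique $\sum_{n\ioe x}\mu(n)\lfloor x/n\rfloor=1$. Pour l'établir j'écrirais $m(t)=\sum_{n\ioe t}\mu(n)/n$ comme fonction en escalier et j'intervertirais somme et intégrale, d'où $\int_{1}^{x} m(t)\lfloor x/t\rfloor\,\diff{t}=\sum_{n\ioe x}\frac{\mu(n)}{n}\int_{n}^{x}\lfloor x/t\rfloor\,\diff{t}$. En écrivant $\lfloor x/t\rfloor=\sum_{k\ioe x/t}1$ et en échangeant de nouveau somme et intégrale, un calcul court donne $\int_{n}^{x}\lfloor x/t\rfloor\,\diff{t}=x\,H(x/n)-n\lfloor x/n\rfloor$ avec $H(y)=\sum_{k\ioe y}1/k$. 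En sommant et en invoquant les deux identités de convolution $\sum_{n\ioe x}\frac{\mu(n)}{n}H(x/n)=\sum_{N\ioe x}\frac1N\sum_{d\mid N}\mu(d)=1$ et $\sum_{n\ioe x}\mu(n)\lfloor x/n\rfloor=1$, on obtient la valeur annoncée.

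Je remplacerais ensuite $\lfloor x/t\rfloor$ par $x/t-\{x/t\}$ dans cette identité ; comme $\int_{1}^{x} m(t)(x/t)\,\diff{t}=x\check{m}(x)$ par définition de $\check{m}$, il vient après réarrangement
\[
x\big(\check{m}(x)-1\big)=\int_{1}^{x} m(t)\{x/t\}\,\diff{t}-1.
\]
Pour $x\soe 2$ j'utiliserais que $m(t)=1$ sur $[1,2[$, donc $\int_{1}^{2} m(t)\,\diff{t}=1$, ce qui permet de réécrire le membre de droite sous la forme $\int_{1}^{2} m(t)\big(\{x/t\}-1\big)\,\diff{t}+\int_{2}^{x} m(t)\{x/t\}\,\diff{t}$ ; en majorant $0\ioe\{x/t\}<1$ et $|m|=1$ sur $[1,2[$ on en tire $x|\check{m}(x)-1|\ioe\int_{1}^{2}1\,\diff{t}+\int_{2}^{x}|m(t)|\,\diff{t}=\int_{1}^{x}|m(t)|\,\diff{t}$, soit même un peu mieux que l'énoncé. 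Pour $1\ioe x<2$ on a simplement $\check{m}(x)=\log x$, et l'inégalité voulue $1-\log x\ioe (x-1)/x+1/x^{2}$ se vérifie en notant que les deux membres coïncident en $x=1$ et que la différence a pour dérivée $(x-1)(x+2)/x^{3}\soe 0$ sur $[1,\infty[$.

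L'unique endroit où il faut réellement trouver quelque chose est l'identité $\int_{1}^{x} m(t)\lfloor x/t\rfloor\,\diff{t}=x-1$ ; une fois celle-ci disponible, la conclusion s'obtient par un découpage élémentaire de l'intégrale. La seule subtilité restante est le voisinage de $x=1$, où l'intégrale $\int_{1}^{x}|m(t)|\,\diff{t}$ est à elle seule trop petite et où le terme additif $1/x^{2}$ devient indispensable (pour $x\soe 2$ il est superflu).
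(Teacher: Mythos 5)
Votre démonstration est correcte, mais elle suit une route réellement différente de celle du texte. Le texte applique la proposition \ref{terre} avec $\vphi(t)=2/t$, ce qui fait apparaître le noyau de Balazard $\frac{2}{t^2}\sum_{k \ioe t}k=1+\alpha(t)$ avec $|\alpha(t)|\ioe 1/t$, et l'inégalité tombe en une ligne par inégalité triangulaire, uniformément pour $x \soe 1$. Vous partez au contraire de l'identité intégrale $\int_{1}^{x}m(t)\lfloor x/t\rfloor\,\diff{t}=x-1$ (analogue continu de $\sum_{n\ioe x}\mu(n)\lfloor x/n\rfloor=1$, et qui correspond en substance au choix $\vphi(t)=1$ dans la proposition \ref{terre}) ; le noyau $\lfloor t\rfloor/t=1-\{t\}/t$ est alors moins bien approché par $1$, et une majoration brutale de $\{x/t\}$ par $1$ ne donnerait qu'un terme additif $O(1)$ au lieu de $1/x^{2}$. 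Votre idée clé pour compenser cela --- absorber le $-1$ dans $\int_{1}^{2}m(t)\,\diff{t}=1$ en exploitant $m\equiv 1$ sur $[1,2[$ --- est juste et donne même, pour $x\soe 2$, la borne légèrement plus forte $x|\check{m}(x)-1|\ioe\int_{1}^{x}|m(t)|\,\diff{t}$ sans terme additif ; le cas $1\ioe x<2$ se vérifie bien directement comme vous l'indiquez ($\check{m}(x)=\log x$ et l'étude de $\log x-1/x+1/x^{2}$). En résumé : l'approche du texte est plus courte et uniforme grâce à un meilleur noyau, la vôtre est plus élémentaire (elle n'utilise que Fubini et les deux identités de convolution classiques) et gagne un peu sur la constante pour $x\soe 2$.
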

\begin{proof}

Dans la proposition \ref{terre} on prend $a=\mu$, $\omega(t)=t$, $\vphi(t)=2/t$ et $b(n)=1$, on a
\begin{equation}
\int_{1}^{x} m(x/t) \bigg( \frac{1}{t^2}\sum_{k \ioe t} 2k \bigg) \frac{\diff{t}}{t}
= 2\int_{1}^x   \frac{\diff{t}}{t^3}=1-\frac{1}{x^2}\;. \label{formule m}
\end{equation} 
Balazard dans l'appendice \cite{ramar2023mobius} et dans \cite{BalazardHal} introduit la fonction $\alpha$ par 
    $$\frac{1}{t^2}\sum_{k \ioe t} 2k=1+\alpha(t)$$
    et il prouve en particulier que $|\alpha(t)| \ioe 1/t$. Or $\int_{1}^{x} m(x/t)/t  \diff{t}= \check{m}(x)$,  ainsi 
    \begin{equation*}
         \check{m}(x)-1= \int_{1}^{x} m(x/t) \alpha(t)  \frac{\diff{t}}{t}-\frac{1}{x^2}
    \end{equation*}
    ce qui donne le résultat annoncée par inégalité triangulaire.
\end{proof}

\begin{proof}[Démonstration des propositions \ref{mieux} et \ref{poids}, cas $m(x/t)$]
On part du lemme \ref{Abel} appliqué avec $a(n)=\mu(n)$ et du changement de variable $u=x/t$,
\begin{equation}
\sum_{n \ioe x} \frac{\mu(n)}{n^s}-\frac{m(x)}{x^{s-1}}=(s-1)\int_1^x m(u)u^{-s} \diff{u}=\frac{s-1}{x^{s-1}} \int_{1}^{x}m(x/t) t^{s-2} \diff{t} \,.
\end{equation}
Puis par la proposition \ref{terre} appliquée avec $a=\mu$, $\omega(t)=t$, et $b(n)=1$ on obtient
\begin{equation}
\int_{1}^{x} m(x/t)\frac{1}{t}\sum_{k \ioe t} \vphi\! \left( \frac{t}{k} \right) \frac{\diff{t}}{t}
= \int_{1}^x   \vphi (t)\frac{\diff{t}}{t^2}
\end{equation}
et donc 
\begin{equation}
(s-1)\int_{1}^{x}m(x/t)  \sum_{k \ioe t} \left(\frac{t}{k} \right)^s \frac{ \diff{t}}{t^2}=(s-1)\int_{1}^{x} u^{s} \frac{\diff{u}}{u^2}=x^{s-1}-1 \,.
\end{equation}
On a aussi 
\begin{equation}
    (s-1)\int_{1}^{x}m(x/t) t \frac{ \diff{t}}{t^2}=\sum_{n \ioe x} \frac{\mu(n)}{n} \log(x/n)=\check{m}(x) \,.
\end{equation}
Rappelons que 
 \begin{equation}
Q_s(t)=(s-1)\zeta(s)t^{s}- (s-1) \sum_{k \ioe t} \left(\frac{t}{k} \right)^s-t
\end{equation}
et les formules précédentes permettent d'obtenir une expression pour
$$\frac{1}{x^{s-1}} \int_{1}^{x}m(x/t) Q_s(t) t^{s-2} \diff{t}$$
qui est la première identité de la proposition \ref{mieux}.

Passons aux majorations. On trouvera dans \cite{Balazard2008} de Balazard  p.~34 la formule suivante
\begin{equation*}
(s-1) \sum_{k \ioe t} \left(\frac{t}{k} \right)^s =-t+(s-1)\zeta(s)t^s+(s-1)(1/2-\{t\})+t^{s}(s-1)s\int_{t}^{\infty}(\{u\}-1/2)u^{-s-1} \diff{u}\;, \label{F zeta}
\end{equation*}
elle provient de la formule d'Euler-Maclaurin. On dispose des majorations
\begin{equation}
  \bigg|  t^{s} (s-1)s\int_{t}^{\infty}(\{u\}-1/2)u^{-s-1} \diff{u} \bigg| \ioe  \frac{|s|}{\sigma} \frac{|\sigma-1|}{2}   \quad (s \in \CC, \Re(s)>0)
\end{equation}
ou par la seconde formule de la moyenne et (voir livre \cite{balazardlivre} p.11 et p.17 pour l'utilisation de $ -1/8\ioe \int_{1}^{X}(\{u\}-1/2) \diff{u}=(\{X\}^2-\{X\})/2 \ioe 0$)
\begin{equation}
  \bigg|  t^{\sigma} (\sigma-1)\sigma  \int_{t}^{\infty}(\{u\}-1/2)u^{-\sigma-1} \diff{u} \bigg|  \ioe |\sigma| |\sigma-1| \frac{1}{8t}   \quad (\sigma \in \RR, \sigma>-1) \, .
\end{equation}
On peut aussi continuer avec une intégration par partie 
\begin{equation}
 t^{s} (s-1)s \int_{t}^{\infty}\frac{\{u\}-1/2}{u^{s+1}} \diff{u}= \frac{(s-1)s B_2(\{ t\})}{2 t}-\frac{(s-1)s(s+1) t^{s}}{2 } \int_t^{\infty}   \frac{ B_2(\{u\}) }{u^{s+2}} \diff{u} 
\end{equation}
et ainsi
\begin{equation}
 \bigg| t^{s} (s-1)s \int_{t}^{\infty}(\{u\}-1/2)u^{-s-1} \diff{u} \bigg|  \ioe \frac{|s+1|}{|\sigma+1|} \frac{ |\sigma-1||\sigma|}{6 }  \frac{1}{t} \quad (s \in \CC, \Re(s)>-1)\,.
\end{equation}

\end{proof}

Pour entrevoir les généralisations dans les deux lemmes suivants nous obtenons les formules pour tout entier $k$ bien que dans la fin de ce papier seul $k=1$ soit utilisé. Posons $$\check{m}_k(x)=\sum_{n\ioe x} \frac{\mu(n)}{n}\log^k(x/n) \,.$$
Nous avons vu que $\check{m}_1(x)=\check{m}(x)=O(1)$ et $\check{m}_2(x)=\check{\check{m}}(x)=2\log x+O(1)$. Le lemme suivant, prouvé par Shapiro dans l'article~\cite{shapiro1950theorem} sans avoir recours au théorème des nombres premiers, généralise ces trois cas.
\begin{lem}[Shapiro] Soit $k\soe 1$ un entier, on a $\check{m}_k(x)=Q_k(\log x)+O(1)$ avec $Q_k(X)$ un polynôme de degré $k-1$ qui a pour terme dominant
$kX^{k-1}$.
\end{lem}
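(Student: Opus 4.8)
Le plan est d'établir une récurrence sur $k$ en exprimant $\check m_k$ à l'aide de $\check m_{k-1}$ et de la fonction harmonique lissée $\check H_{k-1}$ via l'identité de convolution $\mu \star \mathbf{1} = \delta$. Plus précisément, en écrivant $\log^k(x/n) = \log^{k-1}(x/n)\cdot\log(x/n)$ et en utilisant la formule $\check m_k(x) = k\int_1^x \check m_{k-1}(t)\,\diff t/t$ (généralisation des relations déjà rappelées $\check m(x)=\int_1^x m(t)\diff t/t$ et $\check{\check m}(x)=2\int_1^x\check m(t)\diff t/t$), on ramène le contrôle de $\check m_k$ à celui de $\check m_{k-1}$ après division par $t$ et intégration. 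Le cas de base est $k=1$ : $\check m_1 = \check m = O(1)$, c'est-à-dire $Q_1(X)=1$ (terme dominant $1\cdot X^0$), résultat déjà acquis dans l'excerpt via le lemme \ref{balcheck}.

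La clé combinatoire est l'identité de type Dirichlet $\sum_{n\ioe x}\frac1n\log^k(x/n) = \sum_{d\ioe x}\frac{\mu(d)}{d}\sum_{m\ioe x/d}\frac1m\log^k\!\big(\tfrac{x/d}{m}\big) \cdot$(faux tel quel) — en fait il faut partir de $\sum_{m\ioe y}\frac1m\log^k(y/m)=\check H_k(y)$ et de $\check H_k(y) = \frac{1}{k+1}\log^{k+1}y + c_k\log^k y + \cdots$, puis inverser : comme $\mathbf{1}\star\mu=\delta$, on a pour toute fonction $F$ la relation $\sum_{d\ioe x}\frac{\mu(d)}{d}\,\check H_k(x/d)$ qui « extrait » le terme dominant. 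Concrètement, je poserais $\check H_k(y)=\frac{y^0}{?}$... plus sobrement : on sait que $\check H_k(\log x) = \frac{1}{k+1}(\log x)^{k+1} + (\text{polynôme de degré} \le k) + O((\log x)^{\text{qqch}}/x)$ par Euler–Maclaurin, et l'identité $\sum_{nd\ioe x}\frac{\mu(d)}{d}\cdot\frac1n = m(x)=O(1/\log x)$ généralisée aux puissances de log donne, après regroupement selon $nd$, une relation entre $\check m_k$, les $\check m_j$ pour $j<k$ et des termes $O(1)$. C'est ce jeu d'inversion de Möbius appliqué aux identités $\check{\check m}(x)=2\log x + O(1)$ et $\check m(x)=O(1)$ qui fournit la structure polynomiale, le terme dominant $kX^{k-1}$ provenant du coefficient $\frac{1}{k+1}$ dans $\check H_k$ multiplié par le facteur $(k+1)$ issu de la dérivation implicite dans la récurrence $\check m_k = k\int_1^x\check m_{k-1}\diff t/t$ appliquée à la partie principale $k\cdot\frac{1}{k}\log^k$... je vérifierais ce décompte de constantes soigneusement.

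Le point délicat sera précisément ce contrôle des constantes dans le terme dominant : il faut s'assurer que, dans la récurrence, la partie $Q_{k-1}(\log t)$ de degré $k-2$ et terme dominant $(k-1)(\log t)^{k-2}$ s'intègre (après division par $t$) en une primitive dont le terme dominant est $\frac{k-1}{k-1}(\log x)^{k-1}=(\log x)^{k-1}$, puis que le facteur $k$ devant l'intégrale donne bien $k(\log x)^{k-1}$, tout en vérifiant que les termes de degré $k$ et $k+1$ qui apparaîtraient naïvement dans $\check H_k$ sont exactement annulés par l'inversion de Möbius (c'est là qu'intervient $\check m(x)=O(1)$ et non $O(\log x)$). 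Une approche alternative, sans doute plus propre, consiste à suivre Shapiro \cite{shapiro1950theorem} directement : établir l'identité $\sum_{n\ioe x}\frac{\mu(n)}{n}\big(\log^k(x/n)-k!\,P_k(\log(x/n))\big)=O(1)$ où $P_k$ est choisi pour que $\mathbf 1\star\mu$ élimine les termes indésirables, ce qui revient à résoudre une relation linéaire triangulaire sur les coefficients. Je renverrais pour les détails à l'article original de Shapiro, en me contentant d'indiquer le schéma de récurrence et le calcul du terme dominant ci-dessus, puisque l'excerpt précise que seul le cas $k=1$ (déjà démontré) sera utilisé dans la suite.
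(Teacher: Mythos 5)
La démonstration du papier se réduit à un renvoi à l'article de Shapiro (elle se contente d'énoncer que Shapiro montre $\sum_{n \ioe x}(\mu(n)/n)\log^k(x/n)=\sum_{i=1}^{k-1}c_i^{(k)}\log^i x+O(1)$ avec $c_{k-1}^{(k)}=k$) ; votre proposition aboutit in fine au même renvoi. Le problème est que le mécanisme que vous mettez en avant comme schéma principal ne peut pas fonctionner tel quel. L'identité $\check m_k(x)=k\int_1^x\check m_{k-1}(t)\,\diff{t}/t$ est exacte et donne bien le terme dominant $kX^{k-1}$ ; mais si l'hypothèse de récurrence est seulement $\check m_{k-1}(t)=Q_{k-1}(\log t)+O(1)$, le terme d'erreur s'intègre en $k\int_1^x O(1)\,\diff{t}/t=O(\log x)$ et non en $O(1)$. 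On n'obtient ainsi que $\check m_k(x)=Q_k(\log x)+O(\log x)$, ce qui ne détermine $Q_k$ qu'à son terme dominant près et ne démontre pas l'énoncé. C'est précisément là toute la difficulté du lemme : déjà pour $k=2$, l'assertion $\check{\check m}(x)=2\log x+O(1)$ est strictement plus forte que ce que fournit l'intégration brute de $\check m(x)=O(1)$.

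La bonne voie est celle que vous esquissez en second. De l'identité exacte $\sum_{d\ioe x}\frac{\mu(d)}{d}\check H_k(x/d)=\log^k x$ (conséquence de $\mu\star 1=\delta$) et du développement $\check H_k(y)=\frac{(\log y)^{k+1}}{k+1}+(\text{polynôme de degré}\ioe k)+O((\log y)^k/y)$, on tire, puisque $\frac1x\sum_{d\ioe x}\log^k(x/d)=O(1)$, la relation triangulaire $\frac{1}{k+1}\check m_{k+1}(x)+\sum_{j\ioe k}a_j\check m_j(x)=\log^k x+O(1)$, d'où le résultat par récurrence sur $k$, avec terme dominant $(k+1)X^k$ : c'est essentiellement l'argument de Shapiro. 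Mais vous ne l'exécutez pas : la partie correspondante de votre texte est explicitement laissée en suspens (vous signalez vous-même qu'une identité est fausse telle quelle et que le décompte des constantes reste à vérifier), et les deux mécanismes restent juxtaposés sans que le premier, défectueux, soit écarté. En l'état, votre proposition contient l'idée juste mais pas de démonstration ; comme le papier ne fait lui-même que citer Shapiro, le plus honnête est soit de renvoyer purement et simplement à \cite{shapiro1950theorem}, soit de rédiger complètement le second schéma.
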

\begin{proof} Shapiro \cite{shapiro1950theorem}  montre que $\sum_{n \ioe x} (\mu(n)/n) \log^k(x/n)=\sum_{i=1}^{k-1}c_i^{(k)}\log^i x+O(1)$ avec $c_{k-1}^{(k)}=k$.
\label{Shapiro}
\end{proof}

\begin{lem} \label{k gen}
Soit $k$ un nombre entier. Pour tout $x\soe1$ on a
 \begin{align*}
\frac{1}{x^{s-1}}\int_{1}^{x}  \Big[\check{m}_k(x/t)-Q_k(\log (x/t)) \Big]  \sum_{j \ioe t}  \left( \frac{t}{j} \right)^s \frac{\diff{t}}{t^2} 
=\int_{1}^x  \Big[ \log^k \left( t \right)-
\sum_{j \ioe t} \frac{1}{j}Q_k \Big( \log  \left(\frac{t}{j} \right)\!\!\Big)  \Big] t^{-s}\diff{t} \,.
\end{align*}
\end{lem}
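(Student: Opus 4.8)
The plan is to obtain the identity from a single application of Proposition~\ref{terre}, once the left-hand integrand has been rewritten in the form that proposition accepts. Denote by $g$ the bracketed function appearing on the right, that is, $g(y)=\log^k y-\sum_{j\ioe y}\tfrac{1}{j}Q_k(\log(y/j))$ for $y\soe1$, so that the target identity reads $\tfrac{1}{x^{s-1}}\int_1^x(\check{m}_k(x/t)-Q_k(\log(x/t)))\sum_{j\ioe t}(t/j)^s\,\tfrac{\diff{t}}{t^2}=\int_1^x g(u)u^{-s}\,\diff{u}$. The key point is that $\check{m}_k-Q_k\circ\log$ is itself a Möbius-weighted Dirichlet summation of this very function $g$.

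First I would establish the arithmetic identity $\sum_{n\ioe y}\tfrac{\mu(n)}{n}g(y/n)=\check{m}_k(y)-Q_k(\log y)$ for every $y\soe1$. Expanding $g$, the contribution $\sum_{n\ioe y}\tfrac{\mu(n)}{n}\log^k(y/n)$ is $\check{m}_k(y)$ by definition, and in the remaining double sum $\sum_{n\ioe y}\tfrac{\mu(n)}{n}\sum_{j\ioe y/n}\tfrac{1}{j}Q_k(\log(y/(nj)))$ the conditions $n\ioe y$ and $j\ioe y/n$ are together equivalent to $nj\ioe y$; grouping the pairs $(n,j)$ according to the value of $m=nj$ and using that $\sum_{d\mid m}\mu(d)$ equals $1$ for $m=1$ and $0$ otherwise collapses this double sum to $Q_k(\log y)$. (This is exactly formula~\eqref{conv} applied to the two sequences $n\mapsto\mu(n)/n$ and $n\mapsto1/n$, whose Dirichlet convolution is the indicator of $n=1$.) Along the way I would record that $g$ is bounded on each interval $[1,x]$, being a finite sum of locally bounded functions, so that $\omega(t):=t\,g(t)$ is bounded there as well.

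Next I would apply Proposition~\ref{terre} with $a=\mu$, $b(n)=1$, $\omega(t)=t\,g(t)$ and $\vphi(t)=t^s$; the restrictions of $\omega$ and $\vphi$ to $[1,x]$ are bounded, so the hypotheses hold. Since $(\mu\star1)(n)$ is $1$ for $n=1$ and $0$ otherwise, the right-hand side of Proposition~\ref{terre} reduces to $\int_1^x\omega(x/t)\,t^s\,\tfrac{\diff{t}}{t}$, while on the left $\sum_{n\ioe z}\mu(n)\omega(z/n)=z\sum_{n\ioe z}\tfrac{\mu(n)}{n}g(z/n)=z(\check{m}_k(z)-Q_k(\log z))$ by the arithmetic identity, evaluated at $z=x/t$. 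Dividing the resulting equality by $x$ yields $\int_1^x(\check{m}_k(x/t)-Q_k(\log(x/t)))\sum_{j\ioe t}(t/j)^s\,\tfrac{\diff{t}}{t^2}=\int_1^x g(x/t)\,t^{s-2}\,\diff{t}$. Finally the substitution $u=x/t$ turns the right-hand side into $x^{s-1}\int_1^x g(u)u^{-s}\,\diff{u}$, and dividing through by $x^{s-1}$ gives precisely the asserted identity.

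I do not expect a genuine obstacle: the argument is entirely formal, and in particular the polynomial nature of $Q_k$ (hence Lemma~\ref{Shapiro}) plays no role here — any locally bounded function in place of $Q_k$ would give the same identity, the sole purpose of that choice being that $\omega(t)=t\,g(t)$ then satisfies the boundedness hypothesis of Proposition~\ref{terre}. The only points requiring minor care are: renaming the inner summation index so that it does not clash with the fixed integer $k$ occurring in $\check{m}_k$; checking the boundedness of $\omega$ and $\vphi$ on $[1,x]$; and matching the summation ranges correctly in the reindexing $m=nj$ (equivalently, in the invocation of~\eqref{conv}).
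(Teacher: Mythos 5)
Your proof is correct, and it reorganizes the argument in a way that differs from the paper's. The paper invokes Proposition~\ref{terre} twice — once with $a=\mu$, $\omega(t)=t\log^k t$, $b=1$ to handle the $\check m_k$ term, and once in the symmetric form \eqref{voyage} (i.e.\ with $a=\mu\star 1=\delta_1$) with $\omega(t)=tQ_k(\log t)$ to handle the $Q_k(\log(x/t))$ term — and then subtracts the two resulting identities. You instead fold both right-hand terms into the single function $g(y)=\log^k y-\sum_{j\ioe y}\tfrac1j Q_k(\log(y/j))$, verify by the convolution identity $(\mu/\mathrm{id})\star(1/\mathrm{id})=\delta_1/\mathrm{id}$ that $\sum_{n\ioe y}\tfrac{\mu(n)}{n}g(y/n)=\check m_k(y)-Q_k(\log y)$, and then apply Proposition~\ref{terre} once with $\omega(t)=t\,g(t)$, $\vphi(t)=t^s$. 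The computations (boundedness of $\omega$ on $[1,x]$, collapse of the double sum over $nj\ioe y$, the substitution $u=x/t$ producing the factor $x^{s-1}$) all check out. What your route buys is that it makes explicit the structural reason the identity holds — the bracket on the left is exactly the Möbius transform of the bracket on the right — whereas the paper leaves this implicit in the subtraction of two separate instances of the integral convolution formula; the paper's version, on the other hand, isolates the symmetric identity \eqref{voyage}, which it reuses elsewhere. You are also right that Lemma~\ref{Shapiro} plays no role here: the identity is purely formal and holds for any locally bounded $Q_k$.
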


\begin{proof}
La proposition \ref{terre} utilisée avec $a=\mu$, $\omega(t)=t \log^k t  $ et $b=1$ fournit l'identité
\begin{equation}
\int_{1}^{x}  \check{m}_k(x/t)  \sum_{k \ioe t} \vphi\! \left( \frac{t}{k} \right) \frac{\diff{t}}{t^2}
=\int_{1}^x  \log^k \left( x/t \right)  \vphi (t)\frac{\diff{t}}{t^2} \,.
\end{equation}
En prenant $a=1\star \mu$ et $b=1$ dans la proposition \ref{terre} on obtient
\begin{equation}
\int_{1}^{x}\omega \left( \frac{x}{t}  \right) \sum_{k \ioe t}\vphi\left(\frac{t}{k}\right) \frac{\diff{t}}{t}=\int_{1}^{x}\vphi\left( \frac{x}{t}  \right) \sum_{k \ioe t}\omega \left(\frac{t}{k}\right) \frac{\diff{t}}{t}\;, \label{voyage}
\end{equation}
avec $\omega(t)=tQ_k (\log t) $ et $\vphi(t)=t^s$ on arrive à 
\begin{equation*}
\int_{1}^{x}  \frac{x}{t}  Q_k(\log(x/t)) \sum_{j \ioe t}  \left( \frac{t}{j} \right)^s  \frac{\diff{t}}{t}=
\int_{1}^{x} (x/t)^s    \sum_{j \ioe t} \frac{t}{j} Q_k\big( \log\left( \frac{t}{j} \right) \big) \frac{\diff{t}}{t}
\end{equation*} 
et obtient l'identité annoncée par soustraction.
\end{proof}

\begin{proof}[Démonstration des propositions \ref{mieux} et \ref{poids}, cas $\check{m}(x/t)$]
On utilise le lemme \ref{k gen} avec $k=1$, on a
\begin{align} \label{k 1}
\frac{1}{x^{s-1}}\int_{1}^{x}  [\check{m}(x/t)-1 ]  \sum_{j \ioe t}  \left( \frac{t}{j} \right)^s \frac{\diff{t}}{t^2} 
=\int_{1}^x  \Big[ \log t -
\sum_{j \ioe t} \frac{1}{j}  \Big] t^{-s}\diff{t}\,.
\end{align}
On a également
\begin{equation} \label{check m integ double}
   \int_{1}^{x}  [\check{m}(x/t)-1 ]  t \frac{\diff{t}}{t^2}= \frac{1}2\check{\check{m}}(x)-\log x \,.
\end{equation}
Par  l'égalité \eqref{Mellin tronq check m} on a
\begin{equation} \label{tr}
\frac{(s-1)^2}{x^{s-1}} \int_{1}^{x}\check{m}(x/t) t^{s-2} \diff{t}=\sum_{n \ioe x} \frac{\mu(n)}{n^s}-\frac{m(x)}{x^{s-1}}-(s-1)\frac{\check{m}(x)}{x^{s-1}}\,.
\end{equation}
Posons
\begin{equation}
q_s(t)=(s-1)^2\zeta(s)t^{s}- (s-1)^2 \sum_{k \ioe t} \left(\frac{t}{k} \right)^s-t(s-1)=(s-1)Q_s(t)
\end{equation}
on utilise successivement  les trois égalités \eqref{tr}, \eqref{k 1} et \eqref{check m integ double}   on a
\begin{align*}
&\frac{1}{x^{s-1}}\int_{1}^{x}  [\check{m}(x/t)-1 ]  q_s(t) \frac{\diff{t}}{t^2}=\\
&\zeta(s)\bigg(\sum_{n \ioe x} \frac{\mu(n)}{n^s}-\frac{m(x)}{x^{s-1}}-(s-1)\frac{\check{m}(x)}{x^{s-1}} \bigg)+(s-1)(-1+x^{1-s})\zeta(s)\\
&-(s-1)^2\int_{1}^x  \Big[ \log t -
\sum_{j \ioe t} \frac{1}{j}  \Big] t^{-s}\diff{t}\\
&-\frac{(s-1)}{x^{s-1}}(\frac{1}2\check{\check{m}}(x)-\log x)
\end{align*}
or
\begin{equation}
    -(s-1)^2\int_{1}^x  \Big[ \log t -
\sum_{j \ioe t} \frac{1}{j}  \Big] t^{-s}\diff{t}=(s-1)^2\int_{x}^{\infty}  \Big[ \log t -
\sum_{j \ioe t} \frac{1}{j}  \Big] t^{-s}\diff{t}+(s-1)\zeta(s)-1
\end{equation}
donc
\begin{align*}
\zeta(s)&\bigg(\sum_{n \ioe x} \frac{\mu(n)}{n^s}-\frac{m(x)}{x^{s-1}}-(s-1)\frac{\check{m}(x)-1}{x^{s-1}} \bigg)-1=\frac{(s-1)}{x^{s-1}}\big(\frac{1}2\check{\check{m}}(x)-\log x-\gamma\big)\\
&+\frac{ (s-1)}{x^{s-1}}\int_{1}^{x}  [\check{m}(x/t)-1 ] Q_s(t) \frac{\diff{t}}{t^2}
-(s-1)^2\int_{x}^{\infty}  \Big[ \log t -
\sum_{j \ioe t} \frac{1}{j} -\gamma \Big] t^{-s}\diff{t} \,.
\end{align*}

\end{proof}

\bibliographystyle{alpha}

\bibliography{bibarticle}

\end{document}